\newtheorem{lemma}{Lemma}[section]
\newtheorem{corollary}{Corollary}[section]
\newtheorem{theorem}{Theorem}
\newtheorem{question}{Question}
\newtheorem{remark}{Remark}[section]
\newtheorem{proposition}{Proposition}[section]
\newtheorem{definition}{Definition}[section]
\def\ran{{\rm ran}}
\def\<{\langle}
\def\>{\rangle}
\def\to{\rightarrow}
\def\rad{{\rm rad}}
\def\spec{{\rm spec}}
\def\hth{{\mathcal{H}\otimes\mathcal{H}^*}}
\def\rk{{\rm rk}}
\begin{document}

\title{Limits of kernel operators and the spectral regularity lemma}
\author{{\sc Bal\'azs Szegedy}}

\maketitle

\abstract{We study the spectral aspects of the graph limit theory. We give a description of graphon convergence in terms of converegnce of eigenvalues and eigenspaces. Along these lines we prove a spectral version of the strong regularity lemma. Using spectral methods we investigate group actions on graphons. As an application we show that the set of isometry invariant graphons on the sphere is closed in terms of graph convergence however the analogous statement does not hold for the circle. This fact is rooted in the representation theory of the orthogonal group.}

\tableofcontents

\section{Introduction}

The so called graph limit theory (see \cite{FLS},\cite{LSz1},\cite{BCLSV1},\cite{BCL},\cite{LSz3},\cite{LSz6},\cite{LSz7},\cite{LSz8}) is a type of calculus developed on the completion of the set of finite graphs.
As it was proved in \cite{LSz1}, the elements of the completion can be represented by two variable symmetric functions $W:[0,1]^2\rightarrow[0,1]$. It is very natural to interpret such functions as self adjoint integral kernel operators on $L_2([0,1])$. Classical theory says that every such operator has a spectral decomposition converging in $L_2$. In this paper we focus on the spectral aspects of the graph limit theory. In the course of this investigation various interesting topics come up.

It was proved in \cite{LSz3} that the graph limit space is compact in the topology generated by a distance (called $\delta_\square$) derived from the well known cut norm. This compactness implies a strong form $\cite{AFKS}$ of Szemer\'edi's regularity lemma \cite{Szem2}.
We give a new interpretation of cut norm convergence and $\delta_\square$ convergence in terms of spectral decompositions. Roughly speaking we prove that a sequence is convergent if and only if the eigenvalue sequences and the eigenspace structures converge in a rather strong way.

As a consequence we obtain a spectral form of the strong regularity lemma which can be regarded as a generalization of the strong regularity lemma by Alon, Fischer, Krivelevich and Szegedy \cite{AFKS}. We mention that numerous spectral aspects of the regularity lemma were studied by several authors. The closest approach to ours is by Frieze and Kannan \cite{FK}. An advantage of this type of regularization is that it is invariant under the symmetry group of the graph or graphon.

Using this fact we show that graphons or graphs can be regularized in a way that the structured part, which is a step function with bounded number of steps, is approximatively invariant under every automorphism of the graphon or graph. We call this statement the ``symmetry preserving regularity lemma''. A symmetry preserving removal lemma was proved in \cite{Sz1}

If a unitary group action $G$ on $L_2([0,1])$ stabilizes a given graphon then the eigenspaces are also invariant under $G$. In particular they define finite dimensional representations of $G$.
This creates an interesting connection between regularization and representation theory.
In \cite{Gow} Gowers proved that if in a finite group the dimension of the minimal non trivial irreducible representation is sufficiently big then its Cayley graphs are all arbitrarily quasi random. It is not hard to generalize this result for graphons with unitary group actions (see corollary \ref{qra2}). In the infinite case however a new interesting phenomenon appears. Let $(V,\mu)$ be a probability space. If a unitary action of $G$ on $L_2(V,\mu)$ satisfies the condition that for every $k$ there is a finite dimensional subspace of $L_2(V,\mu)$ containing all the $G$ invariant subspaces of dimension at most $k$ then we say the $G$ acts weakly random.
It turns out that graphons invariant under a weakly random action behave in a more controlled way.
For example we prove that they form a closed set in the cut norm and so in the $\delta_\square$ distance. In particular the set of such graphons has a graph theoretic characterization using inequalities in subgraph densities.

Quite surprisingly, the circle behaves very different from the higher dimensional spheres. The set of isometry invariant graphons on the spheres of dimension $\geq 2$ is closed in the $\delta_\square$-norm and so it has a ``graph theoretic characterization''.
On the other hand isometry invariant graphons on the circle can have limits which can only be defined on the torus (or some other compact abelian group).
This fundamental difference is coming from the fact that the action of $O_{n+1}$ on $L_2(S_n)$ is weakly random if and only if $n\geq 3$.

\subsection{Hilbert-Schmidt kernel operators}

Let us fix a separable probability space $(V,\mu)$. Let $\mathcal{H}$ denote the complex Hilbert space $L_2(V,\mu)$ with scalar product $(f,g)=\mathbb{E}_v(\overline{f(v)}g(v))$.
The elements of $\mathcal{H}$ are measurable functions so it makes sense to talk about their $L_\infty$ (or $L_1$) norms even though the $L_\infty$ norm might be infinite.

Recall that a sequence $\{f_i\}_{i=1}^\infty$ is called weakly convergent if $\{(f_i,g)\}_{i=1}^\infty$ is convergent for every $g\in\mathcal{H}$. It follows from the principle of uniform boundedness that weakly convergent sequences are bounded. Every weakly convergent sequence has a limit in $\mathcal{H}$. It is easy to see that every bounded sequence has a weakly convergent subsequence. It is known that convex bounded closed sets are weakly compact. For example the closed unit ball in the $L_\infty$ norm is weakly compact.

\begin{lemma}\label{trivweak} Let $\{f_i\}$ be a weakly convergent sequence in $\mathcal{H}$ with limit $f$ such that $\lim_{i\to\infty}\|f_i\|_2=\|f\|_2$ then $\{f_i\}_{i=1}^\infty$ converges to $f$ in the $L_2$ norm.
\end{lemma}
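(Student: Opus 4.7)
The plan is to reduce the statement to a direct computation by expanding the squared $L_2$-distance in terms of the inner product defined in the paper, namely $(f,g) = \mathbb{E}_v(\overline{f(v)}g(v))$. Writing out
$$\|f_i - f\|_2^2 \;=\; (f_i - f,\, f_i - f) \;=\; \|f_i\|_2^2 \;-\; (f_i, f) \;-\; (f, f_i) \;+\; \|f\|_2^2,$$
I would then track the limit of each of the four terms and show that they sum to $0$.

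The outer two terms are immediate from the hypothesis: $\|f_i\|_2 \to \|f\|_2$ gives $\|f_i\|_2^2 \to \|f\|_2^2$. For the cross terms I would apply the weak convergence $f_i \to f$ with the specific test function $g = f$, obtaining $(f_i, f) \to (f, f) = \|f\|_2^2$. Because the scalar product is conjugate-linear in the first slot, $(f, f_i) = \overline{(f_i, f)}$, and since $\|f\|_2^2$ is real, taking complex conjugates gives $(f, f_i) \to \|f\|_2^2$ as well. Adding the four contributions yields $\|f_i - f\|_2^2 \to \|f\|_2^2 - \|f\|_2^2 - \|f\|_2^2 + \|f\|_2^2 = 0$, which is exactly $L_2$-convergence of $f_i$ to $f$.

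There is no real obstacle here; the lemma is the standard Hilbert space fact that weak convergence together with convergence of norms upgrades to strong convergence, and the polarization identity above does all the work in three lines. The only point requiring a moment of care is the convention on the inner product, which is why both mixed terms $(f_i,f)$ and $(f,f_i)$ have to be tracked separately and why the latter is handled by complex conjugation rather than a second independent application of weak convergence.
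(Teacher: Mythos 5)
Your proof is correct and follows essentially the same route as the paper: expand $\|f_i-f\|_2^2$ via the inner product and use weak convergence against the test function $f$ together with the norm hypothesis to see the four terms cancel. Your write-up is in fact slightly more careful than the paper's (which conflates $\|\cdot\|_2$ with $\|\cdot\|_2^2$ in its one-line computation), but there is no substantive difference in approach.
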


\begin{proof} We have $\lim_{i\to\infty}\|f_i-f\|_2=\lim_{i\to\infty}(f-f_i,f-f_i)=\lim_{i\to\infty}\|f\|_2+\|f_i\|_2-(f,f_i)-(f_i,f)=0$.
\end{proof}

\bigskip

A function $M:V\times V\rightarrow\mathbb{C}$ is called a Hilbert-Schmidt kernel operator if $M\in L_2(V\times V,\nu\times\nu)$. This is equivalent with saying that $M\in\hth$.
The operator $M$ acts on $\mathcal{H}$ by $f\mapsto Mf$ where $Mf(x)=\mathbb{E}_y(M(x,y)f(y))$.
The image space $\ran(M)$ of $M$ is the Hilbert space generated by the functions $\{Mf|f\in\mathcal{H}\}$.

We will use the notion of weak convergence of kernel operators. A sequence of Hilbert-Schmidt kernel operators $\{H_i\}_{i=1}^\infty$ is called weakly convergent if they are weakly convergent in the Hilbert space $\hth=L_2(V\times V)$.
It is easy to see that if $\{\|H_i\|_2\}_{i=1}^\infty$ is a bounded sequence then $\{H_i\}_{i=1}^\infty$ is weakly convergent if and only if
the sequences $\{g^*H_if\}_{i=1}^\infty$ are convergent for every pair $f,g\in L_2(V)$.

An important consequence of the Cauchy-Schwartz inequality is that
\begin{equation}\label{caeq}
\|Mg\|_2\leq\|M\|_2\|g\|_2~~~~{\rm and}~~~~|f^*Mg|\leq\|f\|_2~\|g\|_2~\|M\|_2
\end{equation}
for every $f,g\in\mathcal{H}$ and $M\in\hth$.
We will need the next lemma.

\begin{lemma}\label{oneqeak} Let $\{f_i\}_{i=1}^\infty$ be a weakly convergent sequence in $\mathcal{H}$ with limit $f$. If $M\in\hth$ then $\lim_{i=1}^\infty \|Mf_i-Mf\|_2=0$.
\end{lemma}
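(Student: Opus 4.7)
The plan is to use the standard fact that Hilbert--Schmidt operators are compact, realized here concretely by approximation with finite rank kernels. First I would record the easy a priori bound: since $\{f_i\}$ is weakly convergent, the uniform boundedness principle (as noted just before the lemma) gives a constant $C$ with $\|f_i\|_2 \leq C$ for all $i$ and $\|f\|_2 \leq C$.

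Next I would approximate $M$ in the $L_2$ norm by finite rank kernels. Fix $\epsilon > 0$. Since $\hth = L_2(V) \otimes L_2(V)$ as Hilbert spaces, elementary tensors $a \otimes b$ (i.e., kernels of the form $(x,y) \mapsto a(x) b(y)$) have dense linear span, so there exist $a_1,\ldots,a_n, b_1,\ldots, b_n \in L_2(V)$ with
\[
\bigl\|M - M_\epsilon\bigr\|_2 < \epsilon, \qquad M_\epsilon(x,y) := \sum_{k=1}^n a_k(x)\, b_k(y).
\]
For this finite rank kernel, $M_\epsilon g(x) = \sum_k a_k(x) \, \mathbb{E}_y(b_k(y) g(y))$, and each coefficient is a fixed continuous linear functional of $g$. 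Therefore weak convergence $f_i \to f$ gives coefficient-wise convergence, and so $M_\epsilon f_i \to M_\epsilon f$ in the $L_2$ norm.

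Then I would put the two ingredients together via the triangle inequality and (\ref{caeq}):
\[
\|M f_i - M f\|_2 \;\leq\; \|(M - M_\epsilon)(f_i - f)\|_2 \;+\; \|M_\epsilon f_i - M_\epsilon f\|_2 \;\leq\; \epsilon \cdot 2C \;+\; \|M_\epsilon f_i - M_\epsilon f\|_2.
\]
Letting $i \to \infty$ kills the second term, giving $\limsup_i \|Mf_i - Mf\|_2 \leq 2C\epsilon$, and since $\epsilon > 0$ was arbitrary the limit is $0$.

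The only non-routine step is the approximation of $M$ by finite rank kernels; everything else is bookkeeping via Cauchy--Schwartz. This density is a standard fact about tensor products of $L_2$ spaces (or equivalently, one may expand $M$ in an orthonormal basis of $\hth$ built from products of basis elements of $L_2(V)$ and truncate), so I don't expect any real obstacle — the proof is morally just ``compact operators turn weak convergence into norm convergence.''
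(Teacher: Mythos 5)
Your proof is correct and follows essentially the same route as the paper: approximate the Hilbert--Schmidt kernel $M$ in $L_2$ by a finite rank operator (the paper truncates the expansion $M=\sum_{i,j}\alpha_{i,j}b_i^*b_j$ in an orthonormal basis, which is the same as your elementary-tensor approximation), observe that finite rank operators send weakly convergent sequences to norm convergent ones, and control the error term via the Cauchy--Schwartz bound (\ref{caeq}) and the uniform bound on $\|f_i\|_2$. No gaps.
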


\begin{proof} We have that $\|f_i\|_2\leq c$ for every $i$ with some positive constant $c$. Let $\{b_i\}_{i=1}^\infty$ be an ortho-normal basis in $\mathcal{H}$. Then $M=\sum_{i,j}\alpha_{i,j}b_i^*b_j$ where $\sum_{i,j}|\alpha_{i,j}|^2=\|M\|_2$. For every $\epsilon>0$ there is a number $t$ such that $M_t=\sum_{1\leq i,j\leq t}\alpha_{i,j}b_i^*b_j$ satisfies $\|M-M_t\|_2\leq\epsilon$. We have that by (\ref{caeq}) that
$$\|Mf_i-Mf\|_2=\|(M-M_t)f_i+(M-M_t)f+M_tf_i-M_tf\|_2\leq 2\epsilon c+\|M_tf_i-M_tf\|_2.$$
If $i$ is big enough then
$$\|M_tf_i-M_tf\|^2_2=\sum_{i=1}^t\Bigl|\sum_{j=1}^t\alpha_{i,j}(b_j,f_i-f)\Bigr|^2$$ is smaller than $(\epsilon c)^2$ and for such indices $\|Mf_i-Mf\|_2\leq 3\epsilon c$.
Applying it for every $\epsilon>0$ the proof is complete.
\end{proof}

This implies immediately the next lemma.

\begin{lemma}\label{twoweak} Let $\{f_i\}_{i=1}^\infty$ and $\{g_i\}_{i=1}^\infty$ be two weakly convergent sequences in $\mathcal{H}$ with limits $f$ and $g$. Let $M$ be a Hilbert-Schmidt kernel operator. Then $\lim f_i^*Mg_i=f^*Mg$.
\end{lemma}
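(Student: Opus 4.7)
The plan is to reduce the claim to the previous lemma (Lemma \ref{oneqeak}) combined with the definition of weak convergence, using the triangle inequality to split the difference into two pieces, each of which is handled by one of those two facts.

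First I would rewrite $f_i^* M g_i - f^* M g$ as $(f_i, M g_i) - (f, M g)$ and insert the ``mixed'' term $(f_i, M g)$, giving
\[
(f_i, M g_i) - (f, M g) = (f_i, M g_i - M g) + (f_i - f, M g).
\]
This is the standard trick for handling a bilinear (or sesquilinear) form applied to two simultaneously varying arguments: replace one argument at a time.

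Next I would bound the two pieces. For the first piece, apply Cauchy--Schwartz to get $|(f_i, M g_i - M g)| \leq \|f_i\|_2 \, \|M g_i - M g\|_2$. Since $\{f_i\}$ is weakly convergent it is norm-bounded (by the uniform boundedness principle, as noted in the text), and Lemma \ref{oneqeak} applied to the weakly convergent sequence $\{g_i\}$ gives $\|M g_i - M g\|_2 \to 0$; together these force the first piece to $0$. For the second piece, note that $M g$ is a fixed element of $\mathcal{H}$, so $(f_i - f, M g) \to 0$ is exactly the definition of $f_i \to f$ weakly.

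There is essentially no obstacle here: once one invokes Lemma \ref{oneqeak} to upgrade weak convergence of $g_i$ to norm convergence of $M g_i$, the rest is the routine splitting argument. The only thing to keep in mind is the boundedness of $\|f_i\|_2$, which is why weak convergence (and not merely some pointwise convergence of scalar products) is used for the first factor. Finally, since $\epsilon$ does not appear explicitly, no diagonal or uniform estimate is needed; the conclusion follows from two independent limits both being $0$.
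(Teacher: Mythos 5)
Your proof is correct and is exactly the argument the paper intends: the paper gives no written proof, stating only that Lemma \ref{oneqeak} ``implies immediately'' the result, and your splitting $(f_i,Mg_i)-(f,Mg)=(f_i,Mg_i-Mg)+(f_i-f,Mg)$, with boundedness of $\|f_i\|_2$ and Lemma \ref{oneqeak} handling the first term and weak convergence of $f_i$ handling the second, is precisely that immediate implication.
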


Let $M\in\hth$ be a self adjoint Hilbert-Schmidt kernel operator.
It is is well known that $M$ has a spectral decomposition
$$M=\sum_{i=1}^\infty f_if_i^*\lambda_i$$
where $\{f_i\}_{i=1}^\infty$ is an orthogonal system of unit length elements in $\mathcal{H}$ and the number's $\lambda_i\in\mathbb{R}$ satisfy $\sum_{i=1}^\infty|\lambda_i|^2=\|M\|_2$.
The numbers $\{\lambda_i\}_{i=1}^\infty$ are the eigenvalues of $M$ listed with multiplicities.

\begin{definition} Let $M:V\times V\rightarrow\mathbb{C}$ be a self adjoint kernel operator with spectral decomposition $M=\sum_i f_if_i^*\lambda_i$ and $\lambda\geq 0$. Then we denote by $[M]_\lambda$ the sum
$\sum_{\{i|~|\lambda_i|>\lambda\}}f_if_i^*\lambda_i$.
\end{definition}

It is easy to see that $[M]_\lambda$ does not depend on the concrete choice of the functions $f_i$ even if there are multiple eigenvalues. A basis independent definition of $[M]_\lambda$ is  $[M]_\lambda=\sum_{|\tau|>\lambda}\tau P_\tau$ where $P_\tau$ is the orthogonal projection to the eigenspace $W_\tau=\{f|Mf=\tau f\}$.

Since $M$ is a measurable function on $V\times V$ we can talk about the $L_\infty$ and $L_1$ norms of $M$. Kernel operators with finite $L_\infty$ norms will have a special importance for us.

\begin{lemma}\label{eigenmax}Let $M$ be a self adjoint kernel operator with $\|M\|_\infty\leq 1$. Assume that $f\in\mathcal{H}$ satisfies $\|f\|_2=1$ and $Mf=\lambda f$ for some non zero number $\lambda$.
Then $\|f\|_\infty\leq |\lambda|^{-1}$.
\end{lemma}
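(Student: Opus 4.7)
The plan is to use the pointwise (almost everywhere) identity $\lambda f(x) = \mathbb{E}_y(M(x,y)f(y))$ coming from the eigenvalue equation $Mf = \lambda f$, and then bound the right-hand side pointwise in $x$ by a universal constant.

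First I would fix a representative of $f$ and work pointwise in $x$. For (almost) every fixed $x$, the function $y \mapsto M(x,y)$ lies in $L_\infty(V,\mu)$ with $\|M(x,\cdot)\|_\infty \leq \|M\|_\infty \leq 1$. Since $\mu$ is a probability measure, $\|M(x,\cdot)\|_2 \leq \|M(x,\cdot)\|_\infty \leq 1$.

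Next, by Cauchy--Schwartz on the probability space $(V,\mu)$,
\begin{equation*}
|\lambda f(x)| = |\mathbb{E}_y(M(x,y)f(y))| \leq \|M(x,\cdot)\|_2 \, \|f\|_2 \leq 1 \cdot 1 = 1
\end{equation*}
for almost every $x$. Dividing by $|\lambda| > 0$ yields $|f(x)| \leq |\lambda|^{-1}$ almost everywhere, so $\|f\|_\infty \leq |\lambda|^{-1}$.

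There is no real obstacle here: the estimate is a one-line application of Cauchy--Schwartz, combined with the fact that on a probability space the $L_\infty$ bound on $M$ dominates the $L_2$ norm of each row $M(x,\cdot)$. The only subtlety worth noting is the need to choose a measurable representative of $f$ so that the eigenvalue equation holds pointwise almost everywhere, which is automatic since both $Mf$ (defined as an integral) and $f$ are honest measurable functions.
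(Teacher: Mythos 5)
Your proof is correct and follows exactly the same route as the paper's: apply the eigenvalue equation pointwise and bound $|\mathbb{E}_y(M(x,y)f(y))|$ by Cauchy--Schwartz, using that on a probability space $\|M(x,\cdot)\|_2\leq\|M\|_\infty\leq 1$. The paper's one-line proof leaves this last step implicit; you have merely spelled it out.
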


\begin{proof} By Cauchy-Schwartz we have that $|\lambda f(x)|=|Mf(x)|=|\mathbb{E}_y(M(x,y)f(y))|\leq\|f\|_2=1$.
\end{proof}

The spectral radius is an important invariant of kernel operators. It is defined as
$$\rad(M)=\sup_{\|f\|_2=1}\|Mf\|_2.$$
The spectral radius of a self adjoint kernel operator is the maximum of the absolute values of its eigenvalues.

\subsection{The cut norm}

We will use the cut norm on $\hth$ defined by
$$\|M\|_\square=\sup_{\|f\|_\infty,\|g\|_\infty\leq 1}|f^*Mg|.$$
where $f$ and $g$ ranges over all possible measurable functions on $V$ with $L_\infty$ norm at most $1$.
Note that there are several definitions of the $\|.\|_\square$-norm that equivalent up to constant constant multiples. For example in \cite{LSz1} we used $\sup_{S,T}|\int_{S\times T}M|$ where $S,T$ runs through all pairs of measurable sets in $V$.

\begin{lemma}\label{specrad} $\|M\|_\square\leq\spec(M)$.
\end{lemma}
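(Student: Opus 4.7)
The plan is to chain two instances of Cauchy--Schwarz with the fact that $(V,\mu)$ is a probability space, which makes $L_\infty$ norms dominate $L_2$ norms.

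First I would observe that for any measurable $f\colon V\to\mathbb{C}$ one has $\|f\|_2\leq\|f\|_\infty$, since $\mu$ is a probability measure. So any pair $f,g$ that is admissible in the definition of $\|M\|_\square$ automatically satisfies $\|f\|_2,\|g\|_2\leq 1$.

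Next, for such an $f,g$, I would apply Cauchy--Schwarz in the form $|f^*Mg|\leq\|f\|_2\,\|Mg\|_2$. The definition $\spec(M)=\sup_{\|h\|_2=1}\|Mh\|_2$ (the spectral radius introduced just above the lemma) then yields $\|Mg\|_2\leq\spec(M)\|g\|_2$. Combining these,
\[
|f^*Mg|\ \leq\ \|f\|_2\,\|g\|_2\,\spec(M)\ \leq\ \spec(M),
\]
and taking the supremum over all admissible $f,g$ gives $\|M\|_\square\leq\spec(M)$.

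There is no real obstacle here: the argument is a two-line Cauchy--Schwarz computation, and the only conceptual ingredient is the probability-space normalization that converts the $L_\infty$ constraint in the definition of $\|\cdot\|_\square$ into the $L_2$ constraint used in the variational description of $\spec(M)$. I would not bother with a separate step for self-adjointness, since the inequality $|f^*Mg|\leq\spec(M)$ only needs the operator-norm interpretation of $\spec(M)$ and not the spectral decomposition.
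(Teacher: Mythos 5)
Your proof is correct and is essentially identical to the paper's: both convert the $L_\infty\leq 1$ constraint to $\|f\|_2,\|g\|_2\leq 1$ via the probability measure, bound $\|Mg\|_2$ by the spectral radius (the paper's $\rad(M)$, written $\spec(M)$ in the lemma statement), and finish with Cauchy--Schwarz. Nothing differs in substance.
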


\begin{proof} Let $f$, $g$ be arbitrary functions with $\|f\|_\infty\leq 1$ and $\|g\|_\infty\leq 1$. We have that $\|f\|_2\leq 1$ and $\|g\|_2\leq 1$. Then $\|Mg\|_2\leq\rad(M)$ and so by Cauchy-Schwartz $f^*MG\leq\rad(M)$.
\end{proof}

This implies the next lemma.

\begin{lemma}\label{cutapprox} If $M:V\times V\rightarrow\mathbb{C}$ is a self adjoint Hilbert-Schmidt operator and $\alpha>0$ then $\|M-[M]_\alpha\|_\square\leq \alpha$.
\end{lemma}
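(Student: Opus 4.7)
The plan is to reduce this to the preceding Lemma \ref{specrad} applied to the residual operator $N := M - [M]_\alpha$. Since $M = \sum_i f_i f_i^* \lambda_i$ and $[M]_\alpha = \sum_{|\lambda_i|>\alpha} f_i f_i^* \lambda_i$, subtraction gives
\[
N \;=\; \sum_{|\lambda_i|\leq\alpha} f_i f_i^* \lambda_i,
\]
which is again a Hilbert-Schmidt self-adjoint kernel operator (the tail of an $\ell^2$-convergent series in $\hth$ still converges), and whose spectral decomposition has eigenvalues exactly those $\lambda_i$ with $|\lambda_i|\leq\alpha$.

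Next I would observe that the spectral radius of a self-adjoint Hilbert-Schmidt operator equals the supremum of the absolute values of its eigenvalues (this is stated in the paper just before the cut norm subsection). Therefore $\rad(N)\leq \alpha$.

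Finally, applying Lemma \ref{specrad} to the operator $N$ (which is self-adjoint Hilbert-Schmidt, so the hypotheses of that lemma are satisfied) yields
\[
\|M-[M]_\alpha\|_\square \;=\; \|N\|_\square \;\leq\; \spec(N) \;=\; \rad(N)\;\leq\;\alpha,
\]
completing the proof.

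There is no real obstacle here; the only thing worth double-checking is that the expression $[M]_\alpha$ really is well-defined independent of the eigenbasis (noted in the paper right after the definition) and that subtracting it from $M$ simply keeps the piece of the spectrum with $|\lambda_i|\leq\alpha$, which is the basis-independent statement $M - [M]_\alpha = \sum_{|\tau|\leq\alpha} \tau P_\tau$.
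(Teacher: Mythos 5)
Your proposal is correct and follows exactly the route the paper intends: the paper gives no separate argument for this lemma beyond the remark that it ``implies'' it from Lemma \ref{specrad}, the implicit reasoning being precisely that $M-[M]_\alpha$ retains only the eigenvalues of absolute value at most $\alpha$, hence has spectral radius at most $\alpha$, and Lemma \ref{specrad} bounds the cut norm by the spectral radius. Your added care about the basis-independence of $[M]_\alpha$ and the self-adjointness of the tail is a welcome but routine verification; nothing is missing.
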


\begin{lemma}\label{cutcontriv} If a bounded sequence of kernel operators $\{M_i\}_{i=1}^\infty$ converges to $M$ in the cut norm then it converges to $M$ in the weak topology in $\hth$. In particular if $f,g\in\mathcal{H}$ then $\lim_{i\to\infty} f^*M_ig=f^*Mg$.
\end{lemma}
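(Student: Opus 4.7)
The plan is to first obtain convergence of $f^*M_i g$ for $f,g \in L_\infty(V)$ directly from the definition of $\|\cdot\|_\square$, and then upgrade this to weak convergence in $\hth$ through a density argument that uses the $L_2$-boundedness of $\{M_i\}$.

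From the definition of the cut norm we get, for arbitrary $f, g \in L_\infty(V)$,
\[|f^*(M_i-M)g| \leq \|f\|_\infty\|g\|_\infty\|M_i-M\|_\square\longrightarrow 0\]
by homogeneity. Writing out $f^*Ng = \mathbb{E}_{x,y}(\overline{f(x)}\,N(x,y)\,g(y))$, this quantity is precisely the $\hth$-inner product of $N$ against the rank-one tensor $f(x)\overline{g(y)}$. Hence $M_i \to M$ in the weak topology against every rank-one tensor of this form, and by linearity against every finite linear combination of such tensors. Call the span of these tensors $D \subset \hth$.

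The subspace $D$ is dense in $\hth$. Indeed, since $(V,\mu)$ is separable, one can pick a countable orthonormal basis $\{b_k\}$ of $\mathcal{H}$ consisting of $L_\infty$ functions (for instance by Gram--Schmidt applied to the indicators of a refining sequence of finite measurable partitions that generates the $\sigma$-algebra); then $\{b_j(x)\overline{b_k(y)}\}_{j,k\ge 1}$ is an orthonormal basis of $\hth$ contained in $D$.

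Set $C = \sup_i \|M_i\|_2 + \|M\|_2$, which is finite by hypothesis. Given $h\in\hth$ and $\epsilon>0$, pick $h' \in D$ with $\|h-h'\|_2 \leq \epsilon/(2C)$. Cauchy--Schwarz yields $|\langle h-h',M_i-M\rangle|\leq\epsilon$ for every $i$, while $|\langle h',M_i-M\rangle|\leq\epsilon$ for $i$ large by the previous step. Hence $\langle h,M_i\rangle\to\langle h,M\rangle$, which is weak convergence of $M_i$ to $M$ in $\hth$. The final assertion follows by testing this against the tensor $h = f(x)\overline{g(y)} \in \hth$, which is well-defined for any $f,g\in\mathcal{H}$ with $\|h\|_2 = \|f\|_2\|g\|_2$. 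I do not foresee a serious obstacle; the density of $D$ is the only non-trivial input and is standard for separable product probability spaces.
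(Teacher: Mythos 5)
Your proposal is correct and follows essentially the same route as the paper: test cut-norm convergence against rank-one tensors $f(x)\overline{g(y)}$ with $f,g\in L_\infty(V)$, use density of their span in $\hth$, and upgrade to full weak convergence via the $L_2$-boundedness of the sequence. The only difference is that you spell out the density of the span and the boundedness/three-epsilon step, which the paper simply cites as classical.
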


\begin{proof} Let $S$ be the set of finite linear combinations of operators of the form $fg^*$ where $f,g\in L_\infty(V)$. It is classical that $S$ is dense in the Hilbert space $\hth$.
The cut norm convergence implies that $\lim_{i\to\infty} (Q,M_i)=(Q,M)$ for every $Q\in S$. Since $\{M_i\}_{i=1}^\infty$ is a bounded sequence it has to be weakly convergent with limit $M$.
\end{proof}

The previous lemma with lemma \ref{trivweak} implies the next corollary.

\begin{corollary}\label{corhv} If a sequence $\{M_i\}$ in $\hth$ converges to $M$ in the cut norm and $\lim_{i\to\infty}\|M_i\|_2=\|M\|_2$ then $\{M_i\}_{i=1}^\infty$ converges to $M$ in $L_2$.
\end{corollary}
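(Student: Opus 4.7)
The plan is to deduce the corollary by composing the two lemmas cited just above it: Lemma \ref{cutcontriv} converts cut norm convergence into weak convergence in the Hilbert space $\hth$, and Lemma \ref{trivweak}, although stated for $\mathcal{H}=L_2(V,\mu)$, is formally a general Hilbert space statement whose proof works verbatim for $\hth=L_2(V\times V)$.

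First I would verify the boundedness hypothesis required by Lemma \ref{cutcontriv}. The assumption $\lim_{i\to\infty}\|M_i\|_2=\|M\|_2$ immediately forces $\{\|M_i\|_2\}$ to be a bounded sequence of real numbers, so the sequence $\{M_i\}$ is bounded in $\hth$. Applying Lemma \ref{cutcontriv} then yields weak convergence $M_i\to M$ in $\hth$.

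Second, I would invoke the Hilbert-space identity used in the proof of Lemma \ref{trivweak}, but now in $\hth$:
\[
\|M_i-M\|_2^2 = \|M_i\|_2^2+\|M\|_2^2-(M,M_i)-(M_i,M).
\]
Weak convergence in $\hth$ gives $(M,M_i)\to (M,M)=\|M\|_2^2$ and similarly for the conjugate inner product, while the hypothesis gives $\|M_i\|_2^2\to\|M\|_2^2$. Adding these up shows the right-hand side tends to $0$, which is precisely $L_2$ convergence.

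There is essentially no obstacle here; the only thing to be careful about is that Lemma \ref{trivweak} was literally written for $\mathcal{H}$, not for $\hth$, so I would either state that its proof is purely Hilbert-space-theoretic and thus transfers, or reprove the one-line computation directly in $\hth$ as above. Since both inputs are already established, the corollary follows in one short paragraph.
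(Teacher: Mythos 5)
Your proposal is correct and follows essentially the same route as the paper, which derives the corollary exactly by combining Lemma \ref{cutcontriv} (cut norm convergence of a bounded sequence gives weak convergence in $\hth$) with the argument of Lemma \ref{trivweak}. Your extra care in checking the boundedness hypothesis from $\lim_{i\to\infty}\|M_i\|_2=\|M\|_2$ and in noting that Lemma \ref{trivweak} transfers verbatim to $\hth$ is a sensible tightening of what the paper leaves implicit.
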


Let $\psi:V\rightarrow V$ be a measure preserving map. This means that $\psi$ is measurable and $\mu(\psi^{-1}(A))=\mu(A)$ for every measurable subset $A\subseteq V$.
If $W:V\times V\rightarrow\mathbb{C}$ is a kernel operator then we define $W^\psi$ by $W^\psi(x,y)=W(\psi(x),\psi(y))$.

Let $\|.\|_n$ be one of the norms $\|.\|_1~,~\|.\|_2~,~\|.\|_\square$ on $\hth$.
We define the distance $\delta_n(W_1,W_2)$ on $\hth$ by
$$\inf_{\psi_1,\psi_2:V\rightarrow V}\|W_1^{\psi_1}-W_2^{\psi_2}\|_n$$
where $\psi_1$ and $\psi_2$ ranges over all possible measure preserving maps on $V$.

It is easy to see (and was pointed out in several papers) that one of the maps (say $\psi_2$) can be omitted and the other one can be assumed to be invertible. This means
$$\delta_n(W_1,W_2)=\inf_\psi\|W_1^\psi-W_2\|_n$$
where $\psi$ ranges through all invertible measure preserving maps.
This fact together with lemma \ref{corhv} implies the next lemma.

\begin{lemma}\label{corhv2} Let $\{M_i\}_{i=1}^\infty$ be a $\delta_\square$-convergent sequence with limit $M$. If $\|M\|_2=\lim_{i\to\infty}\|M_i\|_2$ then $\{M_i\}_{i=1}^\infty$ converges to $M$ also in $\delta_1$.
\end{lemma}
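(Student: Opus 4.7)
The plan is to transport the $\delta_\square$-convergence to honest cut-norm convergence along a sequence of measure-preserving maps, invoke Lemma~\ref{corhv} on that transported sequence to get $L_2$ convergence, and then use the trivial inequality $\|\cdot\|_1\le\|\cdot\|_2$ valid on the probability space $V\times V$.

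Concretely, I would proceed as follows. Since $\delta_\square(M_i,M)\to 0$ and, as recalled just before the statement, $\delta_\square(M_i,M)=\inf_\psi\|M_i^\psi-M\|_\square$ over invertible measure preserving $\psi$, we may pick for each $i$ an invertible measure preserving map $\psi_i:V\to V$ such that $\|M_i^{\psi_i}-M\|_\square\to 0$. Because $\psi_i\times\psi_i$ is measure preserving on $V\times V$, the $L_2$ norm is invariant under the substitution, so $\|M_i^{\psi_i}\|_2=\|M_i\|_2\to\|M\|_2$. Now apply Corollary~\ref{corhv} to the sequence $\{M_i^{\psi_i}\}$: it converges to $M$ in cut norm and has matching $L_2$ norms, hence it converges to $M$ in $L_2$.

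To pass from $L_2$ to $L_1$, note that $\mu\times\mu$ is a probability measure on $V\times V$, so by Cauchy--Schwartz (or Jensen) one has $\|N\|_1\le\|N\|_2$ for any $N\in\hth$. Therefore $\|M_i^{\psi_i}-M\|_1\to 0$. Finally, by the definition of $\delta_1$ we have $\delta_1(M_i,M)\le\|M_i^{\psi_i}-M\|_1$, and the conclusion follows.

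I do not anticipate any real obstacle: every nontrivial ingredient (cut norm plus $L_2$-norm control $\Rightarrow$ $L_2$ convergence, and the representation of $\delta_n$ as an infimum over a single measure preserving map) is already in place in the excerpt. The only thing to be mildly careful about is choosing the $\psi_i$'s that almost realize the $\delta_\square$-infimum and verifying that the $L_2$ norms of $M_i$ are preserved under $\psi_i\times\psi_i$, which is immediate from the measure-preserving property.
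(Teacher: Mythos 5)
Your proof is correct and is essentially the argument the paper intends: the paper proves this lemma by exactly the combination you use, namely the representation $\delta_n(W_1,W_2)=\inf_\psi\|W_1^\psi-W_2\|_n$ over invertible measure preserving maps together with Corollary \ref{corhv}, followed by the standard $\|\cdot\|_1\le\|\cdot\|_2$ bound on a probability space. Your added care about $\|M_i^{\psi_i}\|_2=\|M_i\|_2$ is the right (and only) detail to check.
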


Let $\mathcal{M}$ denote the set of self adjoint operators in $\hth$ with $L_\infty$ norm at most $1$. Let $\mathcal{X}$ be the space obtained form $\mathcal{M}$ by identifying operators that are $\delta_\square$ distance $0$ from each other. This way $(\mathcal{X},\delta_\square)$ becomes a metric space.
The next theorem follows from the results in \cite{LSz3}:

\begin{theorem}\label{compactness} The metric space $(\mathcal{X},\delta_\square)$ is compact.
\end{theorem}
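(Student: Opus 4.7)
The plan is to show that any sequence $\{M_i\}_{i=1}^\infty$ in $\mathcal{M}$ has a $\delta_\square$-convergent subsequence. The strategy has two parts: first, approximate each $M_i$ at every resolution $1/k$ by a ``step'' operator coming from a compact finite-dimensional space; then extract, by a diagonal argument, a subsequence whose approximations at every level converge after suitable measure-preserving rearrangement.

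For the first part, the spectral tools of this section are well-suited. By Lemma \ref{cutapprox}, $\|M_i-[M_i]_{1/k}\|_\square\leq 1/k$. Since $\|M_i\|_\infty\leq 1$ on a probability space forces $\|M_i\|_2\leq 1$, the truncation $[M_i]_{1/k}$ has rank at most $k^2$, and by Lemma \ref{eigenmax} each of its eigenvectors has $L_\infty$ norm at most $k$. I would then approximate each such eigenvector in $L_2$ by a step function on an equipartition of $V$ into $N(k)$ equal-measure parts, with $N(k)$ depending only on $k$; this yields a step operator $S_{i,k}$ with $\|M_i-S_{i,k}\|_\square\leq 2/k$ encoded by a symmetric $N(k)\times N(k)$ matrix whose entries are bounded by a constant depending only on $k$.

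For the second part, I would exploit the measure-preserving freedom built into $\delta_\square$. After choosing a suitable invertible measure-preserving $\psi_i$, the partition underlying $S_{i,k}$ may be assumed to be a fixed canonical equipartition of $V$ into $N(k)$ blocks, so that $S_{i,k}^{\psi_i}$ lives in a compact Euclidean space. For each fixed $k$, compactness gives a subsequence along which these matrices, and hence $S_{i,k}^{\psi_i}$ in cut norm, converge to some $S_k$; a diagonal argument then produces a single subsequence $\{M_{i_j}\}$ such that $S_{i_j,k}^{\psi_{i_j}}\to S_k$ in cut norm for every $k$. The $S_k$'s form a cut-norm Cauchy sequence in $\mathcal{M}$, hence converge to some $M\in\mathcal{M}$, and the uniform bound $\|M_{i_j}-S_{i_j,k}\|_\square\leq 2/k$ forces $M_{i_j}\to M$ in $\delta_\square$.

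The main obstacle is coherence across scales: the rearrangement $\psi_i$ that aligns the level-$k$ step approximation to the canonical equipartition need not align the level-$(k+1)$ approximation. To bypass this I would build, for each $i$, the approximations $\{S_{i,k}\}_{k\geq 1}$ from a \emph{nested} sequence of partitions, each refining the previous, so that one invertible $\psi_i$ aligns every level simultaneously. This is precisely the point where the regularity-lemma technology of \cite{LSz3} (cited in the statement) is used; the spectral form of the regularity lemma developed later in this paper provides an alternative organizing principle for the same nested approximation.
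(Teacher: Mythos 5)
The first thing to note is that the paper gives no proof of this theorem: it is imported from \cite{LSz3}, and the argument there has exactly the architecture you describe (bounded-complexity step approximations at every scale, rearrangement to a canonical interval partition, diagonal extraction, assembly of the limit). Your one genuinely different ingredient is the \emph{source} of the step approximation: you get it from the spectral truncation $[M_i]_{1/k}$ --- rank at most $k^2$ since $\|M_i\|_2\le 1$, eigenvectors of $L_\infty$ norm at most $k$ by Lemma \ref{eigenmax} --- rather than from the Frieze--Kannan weak regularity lemma. That substitution is very much in the spirit of this paper (it is essentially the Eigenvector clustering lemma proved in the next subsection), and the overall plan is viable.

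Two steps need repair before this is a proof. First, the claim that an eigenvector $f$ with $\|f\|_\infty\le k$ admits an $L_2$-approximation by a step function on an equipartition into $N(k)$ parts is false unless the partition is adapted to $f$: on $V=[0,1]$ the function $f=\mathrm{sign}(\sin(2^m\pi x))$ is a unit eigenvector of the kernel $ff^*\in\mathcal{M}$ with eigenvalue $1$, yet its conditional expectation onto any partition into $N\ll 2^m$ intervals has $L_2$ norm close to $0$, so no fixed (or canonical) equipartition works. What does work is the level-set partition $f^{-1}\bigl([j\epsilon_1,(j+1)\epsilon_1)\bigr)$, which has $O(k/\epsilon_1)$ parts of \emph{unequal, $i$-dependent} measures; you must then either carry the vector of part measures as extra data in the finite-dimensional compactness step (a point of a simplex, still compact), or refine to an equipartition by a rounding argument with a small exceptional set. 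This is the crux of your construction and has to be written out, not asserted. Second, you defer the coherence-across-scales problem to the nested-partition device of \cite{LSz3}; since the theorem you are proving \emph{is} the compactness result of \cite{LSz3}, this is precisely where the proof stops being self-contained. The fix is available inside your own framework --- replace the level-$k$ partition by the common refinement of the level-$1,\dots,k$ level-set partitions, whose size is still bounded in terms of $k$ alone, and rearrange the resulting nested interval partitions by a single $\psi_i$ --- but it must be carried out. Finally, the assertion that a cut-norm Cauchy sequence $\{S_k\}$ in $\mathcal{M}$ converges in $\mathcal{M}$ deserves a line: take the weak limit $M$ supplied by weak compactness of the $L_\infty$ ball and note that $|f^*(S_k-M)g|=\lim_{k'}|f^*(S_k-S_{k'})g|\le\limsup_{k'}\|S_k-S_{k'}\|_\square$ uniformly over $\|f\|_\infty,\|g\|_\infty\le 1$.
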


\subsection{Graph limits}

Let $\mathcal{W}_0$ denote the set of symmetric measurable functions $W:[0,1]^2\rightarrow [0,1]$.
If $G=(V,E)$ is a finite simple graph on the vertex set $[k]=\{1,2,\dots,k\}$ then the homomorphism density of $G$ in $W$ is defined by
$$t(G,W)=\int_{x_1,x_2,\dots,x_n}\prod_{(i,j)\in E,i<j}W(x_i,x_j)~dx_1~dx_2\dots~dx_n$$
where $x_1,x_2,\dots,x_k$ are in $[0,1]$.

The elements of $\mathcal{W}_0$ are also called graphons. Two graphons are equivalent if their $\delta_\square$ distance is $0$.
Let $\mathcal{X}_0$ denote the set of equivalence classes of graphons. The set $(\mathcal{X}_0,\delta_\square)$ is a compact topological space. We call $\mathcal{X}_0$ the graph limit space.
The theory of graph limits is basically the calculus on the graph limit space.

A sequence $\{W_i\}_{i=1}^\infty$ in $\mathcal{W}_0$ is $\delta_\square$-convergent if and only if $\{t(G,W_i)\}_{i=1}^\infty$ is convergent for every simple graph $G$. Two graphons $W_1,W_2$ are equivalent if $t(G,W_1)=t(G,W_2)$ for every simple graph $G$. This implies that homomorphism densities are well defined on the elements of $\mathcal{X}$.
Let $\mathcal{G}$ be the set of finite simple graphs.

\begin{definition} A map $p:\mathcal{X}_0\rightarrow\mathbb{R}$ is called a graph polynomial if there are finitely many graphs $G_1,G_2,\dots,G_n$ in $\mathcal{G}$ and real numbers $\lambda_1,\lambda_2,\dots,\lambda_n$ such that
$p(W)=\sum_{i=1}^n t(G_i,W)\lambda_i$ for every $W\in\mathcal{X}_0$.
\end{definition}

If $G\in\mathcal{G}$ is the disjoint union of $G_1$ and $G_2$ then $t(G,W)=t(G_1,W)t(G_2,W)$ for every $W\in\mathcal{X}_0$. It follows that polynomials are closed under multiplications and so they are forming a commutative algebra (containing the constant functions) of $\delta_\square$ continuous functions on $\mathcal{X}_0$.
Let $K(\mathcal{X}_0)$ denote this algebra.
The Stone-Weierstrass theorem implies that every continuous function on $\mathcal{X}_0$ can be approximated in $L_\infty$ by some polynomial function in $K(\mathcal{X}_0)$.
The next lemma shows that closed subsets of $\mathcal{X}_0$ can be characterized through inequalities in subgraph densities.

\begin{lemma} A set $C\subseteq\mathcal{X}_0$ is a closed subset of $\mathcal{X}_0$ then there is a countable set of graph polynomials $\{p_i\}_{i=1}^\infty$ such that $C=\cap_{i=1}^\infty \{W|p_i(W)\geq 0\}$.
\end{lemma}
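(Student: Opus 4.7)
The plan is to push the Stone-Weierstrass density of $K(\mathcal{X}_0)$ in $C(\mathcal{X}_0)$ (noted just before the statement) through the distance function to $C$. By Theorem~\ref{compactness} applied to graphons, $(\mathcal{X}_0,\delta_\square)$ is a compact metric space, so the function $g:\mathcal{X}_0\to\mathbb{R}$ defined by $g(W)=\delta_\square(W,C)$ is continuous, nonnegative, and vanishes exactly on $C$.

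For each rational $\epsilon>0$, I invoke the Stone-Weierstrass statement quoted in the paragraph preceding the lemma — graph polynomials form a subalgebra of $C(\mathcal{X}_0)$ containing the constants, and they separate points of $\mathcal{X}_0$ by the very definition of the equivalence relation on graphons — to produce a graph polynomial $p_\epsilon\in K(\mathcal{X}_0)$ with $\|p_\epsilon+g\|_\infty<\epsilon/2$. Set $q_\epsilon:=p_\epsilon+\epsilon/2$, which is again a graph polynomial since constants lie in $K(\mathcal{X}_0)$. On $C$ we have $g=0$, so $p_\epsilon\geq-\epsilon/2$ and therefore $q_\epsilon\geq 0$. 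On the set $\{W:g(W)>\epsilon\}$ we have $p_\epsilon<-g+\epsilon/2<-\epsilon/2$, hence $q_\epsilon<0$.

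It follows that $C\subseteq\bigcap_{\epsilon\in\mathbb{Q}_{>0}}\{W:q_\epsilon(W)\geq 0\}$. Conversely, if $W\notin C$ then $g(W)>0$, and choosing any rational $\epsilon\in(0,g(W))$ gives $q_\epsilon(W)<0$, so $W$ is excluded from the intersection. Hence equality holds, and since $\mathbb{Q}_{>0}$ is countable this yields the desired countable family $\{p_i\}_{i=1}^\infty$.

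The argument is essentially a soft functional-analytic wrapping around the two deep inputs (compactness of $\mathcal{X}_0$ and the separating property of subgraph densities); the only step that requires any care is the sign bookkeeping in the shift by $\epsilon/2$, which is what forces one to take an intersection over all rational thresholds rather than a single polynomial cutting out $C$.
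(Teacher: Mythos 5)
Your argument is correct and is essentially the paper's own proof: both take the continuous distance function to the closed set $C$, approximate it uniformly by graph polynomials via Stone--Weierstrass on the compact space $(\mathcal{X}_0,\delta_\square)$, and shift by the approximation error to get a countable family of polynomial inequalities cutting out exactly $C$ (the paper uses $p_i = 1/i - p'_i$ with $p'_i$ a $1/i$-approximation of $d(\cdot,C)$, which is your construction with $\epsilon$ ranging over $\{2/i\}$ instead of $\mathbb{Q}_{>0}$). No substantive difference.
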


\begin{proof} Let $d(W)=\inf\{\delta_\square(W,W_2)|W_2\in C\}$ be the distance function from $C$. Since $d$ is a continuous function we can approximate it with arbitrary $L_\infty$ precision by graph polynomials. Let $p'_i$ be an $1/i$ approximation of $d$ in $K(\mathcal{X}_0)$ and let $p_i=1/i-p'_i$. It is clear that $\{p_i\}_{i=1}^\infty$ is an appropriate system of polynomials.
\end{proof}

\subsection{Convergence in cut norm}

In this part we examine the relationship between spectral decompositions and convergence in cut norm. We use the notation from the previous chapter. Let $\{M_i\}_{i=1}^\infty$ be a sequence of self adjoint kernel operators in $\hth$ with $\|M_i\|_\infty\leq 1$ such that they converge in the cut norm. Let $M$ be the cut norm limit of $\{M_i\}_{i=1}^\infty$. Obviously $M$ is a self adjoint kernel operator and satisfies $\|M\|_\infty\leq 1$.
We will keep this notation for the rest of this chapter and we will prove statements on the properties of the sequence $\{M_i\}_{i=1}^\infty$.

It is not hard to see that cut norm convergence implies the following type of convergence of the spectrums.
If for a kernel operator $W$ with spectrum $\{\lambda_i\}_{i=1}^\infty$ we define the random variable $X(W)$ that takes the value $\lambda_i$ with probability $\lambda_i^4(\sum_i\lambda_i^4)^{-1}$ then the $k$-th moment of $X(W)$ is equal to $t(C_{4+k},W)/t(C_4,W)$ where $C_n$ is the cycle of length $n$. It follows that from \cite{LSz1} that $\{X(M_i)\}_{i=1}^\infty$ converges to $X(M)$ in the weak topology of probability distributions.
The spectrum with multiplicities is fully decodable from $X(W)$ and $t(C_4,W)$ and so at the level of spectrums the cut norm convergence is fully described.

In the rest of the chapter we study joint convergence of the spectrum and the eigenspaces.

\begin{lemma}\label{weakeigen} Let $\{f_i\}_{i=1}^\infty$ be a weakly convergent sequence in $\mathcal{H}$ with limit $f$ such that $\|f_i\|_2=1$ for every $i$ and $M_if_i=f_i\lambda_i$ where $\lim_{i\to\infty}\lambda_i=\lambda\neq 0$. Then $\{f_i\}_{i=1}^\infty$ converges in $L_2$ to $f$ and $Mf=\lambda f$.
\end{lemma}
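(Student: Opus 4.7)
The plan is to upgrade the weak convergence of $\{f_i\}$ to $L_2$ convergence by way of Lemma \ref{trivweak}, which reduces the problem to showing $\|f\|_2=1$. I will first identify $f$ as an eigenvector of $M$ with eigenvalue $\lambda$, and then extract the norm condition from the eigenvalue equation itself.

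The key preliminary observation is that the sequence $\{f_i\}$ is uniformly bounded in $L_\infty$: since $\lambda_i\to\lambda\neq 0$, one has $|\lambda_i|\ge|\lambda|/2$ for all sufficiently large $i$, so Lemma \ref{eigenmax} gives $\|f_i\|_\infty\le 2/|\lambda|$. This uniform $L_\infty$ control is exactly what makes the cut-norm convergence $M_i\to M$ usable, because the definition of $\|\cdot\|_\square$ immediately yields $|u^*(M-M_i)v|\le\|u\|_\infty\|v\|_\infty\|M-M_i\|_\square$ for every measurable $u,v$.

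To prove $Mf=\lambda f$, I test against an arbitrary $g\in L_\infty$. Writing $g^*Mf_i-\lambda_i g^*f_i = g^*(M-M_i)f_i$, the right-hand side tends to $0$ by the cut-norm estimate above together with the $L_\infty$ bound on $f_i$. On the left, $g^*Mf_i\to g^*Mf$ by Lemma \ref{oneqeak} (which gives $Mf_i\to Mf$ in $L_2$, hence weakly), while $\lambda_i g^*f_i\to\lambda g^*f$ by weak convergence of $f_i$ and $\lambda_i\to\lambda$. Hence $g^*(Mf-\lambda f)=0$ for every $g\in L_\infty$, and density of $L_\infty\cap L_2$ in $L_2$ forces $Mf=\lambda f$.

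To pin down $\|f\|_2$, I apply the same cut-norm estimate in the diagonal form $|f_i^*(M-M_i)f_i|\le\|f_i\|_\infty^2\|M-M_i\|_\square\to 0$, and combine it with Lemma \ref{twoweak} applied to the pair $(f_i,f_i)$ to obtain $f_i^*M_if_i\to f^*Mf$. The left side equals $\lambda_i\to\lambda$, while the right side equals $\lambda\|f\|_2^2$ by the eigenvalue equation just established; since $\lambda\neq 0$, this forces $\|f\|_2=1$, and Lemma \ref{trivweak} then delivers the $L_2$ convergence. The only conceptual obstacle is that cut-norm convergence is far too weak to control $\|(M_i-M)f_i\|_2$ directly; the proof has to work exclusively against $L_\infty$-bounded test objects, which is precisely why the $L_\infty$ control on $f_i$ supplied by Lemma \ref{eigenmax} is indispensable.
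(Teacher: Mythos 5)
Your proof is correct, and it draws on the same toolkit as the paper's (the $L_\infty$ bound on eigenvectors from Lemma \ref{eigenmax}, cut-norm estimates against $L_\infty$-bounded test objects, Lemma \ref{twoweak}, and Lemma \ref{trivweak} at the end), but the logical architecture is genuinely different: you prove the two conclusions in the opposite order, and you obtain the eigenvector identity by a different mechanism. The paper first establishes $\|f\|_2=1$, by comparing $f^*M_if_i=\lambda_i(f,f_i)$ with $f^*Mf_i$ through the cross estimate $|f^*(M_i-M)f_i|\le\|f\|_\infty|\lambda_i|^{-1}\|M-M_i\|_\square$ and concluding $(f,f_i)\to 1$; only after upgrading to $L_2$ convergence does it prove $Mf=\lambda f$, by showing $g^*Mf=\lim_{i\to\infty}g^*M_if=0$ for every $g$ orthogonal to $f$ --- a step that genuinely uses the already-proved $L_2$ convergence via the bound $\|g\|_2\|M_i\|_2\|f-f_i\|_2$. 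You instead prove $Mf=\lambda f$ first, using only weak convergence: the identity $g^*Mf_i-\lambda_ig^*f_i=g^*(M-M_i)f_i$ tested against arbitrary $g\in L_\infty$, together with Lemma \ref{oneqeak} and density of $L_\infty$ in $L_2$, pins down $Mf$ directly; the norm then drops out of $f^*Mf=\lambda=\lambda\|f\|_2^2$. Your route has the small advantage of never needing $\|f\|_\infty<\infty$ (which the paper's cross estimate uses implicitly, and which requires knowing that $L_\infty$ balls are weakly closed), and it replaces the orthogonal-complement argument with a cleaner density argument; the paper's route, on the other hand, extracts the quantitative statement $(f,f_i)\to 1$ explicitly along the way. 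Both are of comparable length and rest on the same two cut-norm estimates, deployed in a different order.
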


\begin{proof} Note that lemma \ref{eigenmax} implies that $\|f_i\|_\infty\leq|\lambda_i|^{-1}$. By lemma \ref{twoweak} we have that $\lim_{i\to\infty} f_i^*Mf_i=f^*Mf$.
On the other hand
$$|f_i^*(M-M_i)f_i|\leq |\lambda_i|^{-2}\|M-M_i\|_\square$$ and from $\lambda\neq 0$ we get
\begin{equation}\label{eq1}
0=\lim_{i\to\infty}|f_i^*(M-M_i)f_i|=\lim_{i\to\infty}|f_i^*Mf_i-\lambda_i|=|f^*Mf-\lambda|.
\end{equation}
Using $\|f_i\|_\infty\leq|\lambda_i|^{-1}$ we get
\begin{equation}\label{eq2}
\lim_{i\to\infty}|f^*(M_i-M)f_i|\leq\lim_{i\to\infty}\|f\|_\infty|\lambda_i|^{-1}\|M-M_i\|_\square=0.
\end{equation}
It follows by (\ref{eq2}), lemma \ref{twoweak} and by (\ref{eq1}) that
$$\lim_{i\to\infty}\lambda_i(f,f_i)=\lim_{i\to\infty}f^*M_if_i=\lim_{i\to\infty}f^*Mf_i=f^*Mf=\lambda$$
and so $\lim_{i\to\infty}(f,f_i)=1$. Since $\|f_i\|_2=1$ and $\|f\|_2\leq 1$ this is only possible if $\|f\|_2=1$ and so by lemma \ref{trivweak} $\{f_i\}_{i=1}^\infty$ converges to $f$ in $L_2$.

Now we need to show that $Mf=\lambda f$. Let $g$ be any element in $\mathcal{H}$ with $(g,f)=0$. We have that $$g^*M_if=g^*M_i(f-f_i)+g^*M_if_i=g^*M_i(f-f_i)+\lambda_i(g,f_i)\leq$$
$$\leq \|g\|_2\|M_i\|_2\|f-f_i\|_2+\lambda_i(g,f_i).$$ It implies that $\lim_{i\to\infty} g^*M_if=0.$ On the other hand by lemma \ref{cutcontriv} $\lim_{i\to\infty}g^*M_if=g^*Mf$. It follows that $Mf$ is orthogonal to every function $g$ which is orthogonal to $f$. It follows that $f$ is an eigenvalue of $M$ and by $f^*Mf=\lambda$ and $\|f\|_2=1$ the proof is complete.
\end{proof}

\begin{lemma}\label{speccon1} Let $\lambda>0$ be a number such that $\{-\lambda,\lambda\}\cap\spec(M)=\emptyset$.
Then
\begin{enumerate}
\item $\lim_{i\to\infty}\rk([M_i]_\lambda)=\rk([M]_\lambda)$,
\item $\lim_{i\to\infty}\|[M_i]_\lambda-[M]_\lambda\|_2=0$.
\end{enumerate}
\end{lemma}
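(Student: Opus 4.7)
\emph{Plan.} My approach is to combine the spectrum convergence sketched in the paragraph above the lemma (moment convergence of $X(M_i)$) with lemma \ref{weakeigen} to obtain convergence of the corresponding eigenvectors, and then to identify the limit as $[M]_\lambda$.

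First I would extract the spectral information. Cut norm convergence gives convergence of the moments $t(C_{4+k},M_i)/t(C_4,M_i)$, hence weak convergence of the atomic distributions $X(M_i)\to X(M)$. Since $X(M)$ has only finitely many atoms of modulus $>\lambda$ and, by hypothesis, none at $\pm\lambda$, this forces the atoms of $X(M_i)$ in $\{|x|>\lambda\}$ to cluster around those of $X(M)$ with the correct multiplicities. Translated back to eigenvalues: if the eigenvalues of $M$ of modulus $>\lambda$ are $\tau_1,\ldots,\tau_r$ (with multiplicity), then for all large $i$ the operator $M_i$ has exactly $r$ eigenvalues $\lambda_{i,1},\ldots,\lambda_{i,r}$ of modulus $>\lambda$, and $\lambda_{i,k}\to\tau_k$ after a suitable reindexing. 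This proves (1).

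For (2) I would argue by contradiction. Assume $\|[M_{i_j}]_\lambda-[M]_\lambda\|_2\ge\e$ along some subsequence, and fix orthonormal eigenvectors $f_{i,k}$ with $M_if_{i,k}=\lambda_{i,k}f_{i,k}$ for $k\le r$. By weak compactness of the unit ball of $\mathcal{H}$ and a diagonal extraction, I can pass to a further subsequence along which each $f_{i_j,k}$ converges weakly to some $f_k\in\mathcal{H}$. Because $|\lambda_{i_j,k}|\to|\tau_k|>\lambda>0$, lemma \ref{weakeigen} applies and gives $f_{i_j,k}\to f_k$ in $L_2$ together with $Mf_k=\tau_k f_k$; passing the inner products $(f_{i_j,k},f_{i_j,l})=\delta_{k,l}$ to the limit preserves orthonormality of $\{f_k\}_{k=1}^r$.

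To conclude, I would use $\|uu^*-vv^*\|_2\le\|u-v\|_2(\|u\|_2+\|v\|_2)$ together with $\lambda_{i_j,k}\to\tau_k$ to deduce
\[
[M_{i_j}]_\lambda=\sum_{k=1}^r\lambda_{i_j,k}\,f_{i_j,k}f_{i_j,k}^*\;\longrightarrow\;\sum_{k=1}^r\tau_k f_k f_k^*
\]
in $L_2$. The right hand side equals $[M]_\lambda$: for each eigenvalue $\tau$ of $M$ with $|\tau|>\lambda$, the orthonormal set $\{f_k:\tau_k=\tau\}$ lies in the eigenspace $W_\tau$ and has cardinality equal to $\dim W_\tau$ (by the matching in the first step), hence is an orthonormal basis of $W_\tau$, so the two operators coincide. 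This contradicts $\|[M_{i_j}]_\lambda-[M]_\lambda\|_2\ge\e$. The main obstacle is precisely the multiplicity issue: one cannot pair individual $f_{i_j,k}$ with specific eigenvectors of $M$, and the reason for routing through the eigenspace projections in the final identification is exactly to bypass this ambiguity. Everything else is a routine combination of weak compactness, lemma \ref{weakeigen}, and the spectrum-matching furnished by moment convergence.
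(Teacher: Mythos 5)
Your proof is correct, but it reaches the two conclusions by a route that differs from the paper's in two respects. For part (1) you extract the eigenvalue convergence from the moment method (weak convergence of $X(M_i)$ to $X(M)$), whereas the paper never invokes moments in the proof: it passes to a subsequence along which \emph{all} eigenvalue sequences $\{\lambda_{i,j}\}_i$ and \emph{all} eigenvector sequences $\{f_{i,j}\}_i$ converge (weakly), reconstructs an operator $M'=\sum_{\lambda_j\neq 0}f_jf_j^*\lambda_j$ from the limits, and proves $M'=M$ by a cut-norm tail estimate (the tails $\sum_{j>t}$ have spectral radius at most $1/\sqrt{t+1}$ by lemma \ref{specrad}); the eigenvalue count above the threshold $\lambda$ then falls out, and the full-sequence statement follows by the same subsequence-of-a-subsequence contradiction you use. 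Your identification of the $L_2$ limit with $[M]_\lambda$ is also organized differently: you only track the finitely many eigenvectors above the threshold and match them to the eigenspaces $W_\tau$ by a dimension count, while the paper gets the identification for free from $M'=M$. Both arguments are sound, and yours has the advantage of never touching the infinite tail of the spectrum; the cost is that you lean on the ``spectrum with multiplicities is decodable from $X(W)$ and $t(C_4,W)$'' assertion, which the paper states without proof in the preceding paragraph. Two points you should tighten if you keep this route: (i) the degenerate case $t(C_4,M)=0$ (i.e.\ $M=0$), where $X(M)$ is undefined, needs the separate observation that $t(C_4,M_i)=\sum_j\lambda_{i,j}^4\to 0$ forces all eigenvalues of $M_i$ below $\lambda$ in modulus eventually; and (ii) the step from weak convergence of the reweighted atomic measures to ``exactly $r$ eigenvalues of modulus $>\lambda$, converging to $\tau_1,\dots,\tau_r$ with multiplicity'' deserves a sentence (isolate each atom $\tau$ of $M$ in a small interval carrying no boundary mass, and recover the eigenvalue count near $\tau$ as the limiting mass divided by $\tau^4+O(\delta)$).
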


\begin{proof} Assume that $M_i=\sum_{j=1}^\infty f_{i,j}f_{i,j}^*\lambda_{i,j}$
such that $\{|\lambda_{i,j}|\}_{j=1}^\infty$ is a decreasing sequence and the vectors $\{f_{i,j}\}_{\lambda_{i,j}\neq 0}$ are forming an orthonormal system. To keep the sequences $\{\lambda_{i,j}\}_{j=1}^\infty$ infinite we put an infinite number of $0$'s at the end if $M_i$ has finite rank. (If $\lambda_{i,j}=0$ then $f_{i,j}$ is an arbitrarily chosen function of unit length.)

First of all we prove that there is a subsequence $\{M_i\}_{i\in S}$ satisfying the condition of the lemma. By a standard argument we can choose a subsequence $S$ such that $\{f_{i,j}\}_{i\in S}$ is weakly convergent for every fixed $j$ and $\{\lambda_{i,j}\}_{i\in S}$ is convergent for every $j$. Let $f_j$ be the weak limit of $\{f_{i,j}\}_{i\in S}$ and $\lambda_j$ be the limit of $\{\lambda_{i,j}\}_{i\in S}$. Obviously we have that $\sum_{j=1}^\infty|\lambda_j|^2\leq 1$ and $\{|\lambda_j|\}_{j=1}^\infty$ is a decreasing sequence. It follows that $|\lambda_j|\leq 1/\sqrt{j}$. First of all note that if $\lambda_j\neq 0$ then by lemma \ref{weakeigen} $\lim_{i\to\infty}\|f_{i,j}-f_i\|_2=0$. It follows that if $\lambda_{j_1}$ and $\lambda_{j_2}$ ar both non-zero then $(f_{j_1},f_{j_2})=0$. In other words $\{f_j\}_{\{j|\lambda_j\neq 0\}}$ is an orthonormal system of functions. Let
$$M'=\sum_{\{j|\lambda_j\neq 0\}}f_jf_j^*\lambda_j.$$

First we claim that $M=M'$. For every natural number $t$ we have that $\|M_i-M'\|_\square$ is at most
$$\|\sum_{j=1}^t\Bigl(f_{i,j}f_{i,j}^*\lambda_{i,j}-f_jf_j^*\lambda_j\Bigr)\|_\square+\|\sum_{j=t+1}^\infty f_{i,j}f_{i,j}^*\lambda_{i,j}\|_\square+\|\sum_{j=t+1}^\infty f_jf_j^*\lambda_j\|_\square .$$
The spectral radius of the sums in the last two terms is at most $1/\sqrt{t+1}$. It follows
that if $i$ is big enough then $\|M_i-M'\|_\square\leq 3/\sqrt{t+1}$.
By letting $t$ go to infinity we get that $M_i$ converges to $M'$ in the cut norm and so $M'=M$.

Let $t$ be an integer greater than $\lambda^{-2}$. We have that $|\lambda_{i,j}|\leq\lambda$ whenever $j>t$. Since $\lambda,-\lambda$ are not eigenvalues of $M$ we have that there is an index $i_0$ such that for $|\lambda_{i,j}-\lambda_j|\leq |\lambda-|\lambda_j||/2$ whenever $1\leq j\leq t$ and $i>i_0~,~i\in S$. This means that for such indices $|\{j||\lambda_{i,j}|>|\lambda|\}|=|\{j|\lambda_j>|\lambda|\}|=\rk([M]_\lambda)$ showing that $\rk([M_i]_\lambda)=\rk([M]_\lambda)$.

Now we finish the general case by contradiction. If the first statement is not true then we can choose an infinite subsequence where $\rk([M_i]_\lambda)\neq [M]_\lambda$. This is a contradiction sice from such a subsequence we can not choose a sub sequence satisfying the first condition.
If the second condition fails then we can choose an infinite subsequence for some $\epsilon$ such that $\|[M_i]_\lambda-[M]_\lambda\|_2>\epsilon$. This is again a contradiction
\end{proof}

\subsection{Spectral Regularity lemma}

\begin{theorem}[Spectral regularity lemma]\label{specreg} For an arbitrarily decreasing function $F:\mathbb{\mathbb{R}^+\times\mathbb{R}^+}\rightarrow\mathbb{R}^+$ and every $\epsilon>0$ there is a constant $\delta>0$ such that for every self adjoint kernel operator $M:V\times V\rightarrow\mathbb{C}$ with $\|M\|_\infty\leq 1$ on a separable probability space $(V,\mu)$ there is a real number $\lambda\geq\delta$ such that $M$ has a decomposition
$M=S+E+R$ with the following properties
\begin{enumerate}
\item $S=[M]_\lambda$
\item $\|E\|_2\leq\epsilon$
\item $\|R\|_\square\leq F(\lambda,\epsilon)$
\item $\|S+E\|_\infty\leq 1$
\item $E$ and $R$ are self adjoint.

\end{enumerate}
\end{theorem}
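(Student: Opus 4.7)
The plan is a pigeonhole argument on spectral cutoffs---a spectral analogue of the AFKS strong regularity lemma---combined with compactness (Theorem~\ref{compactness}) and the spectral continuity in Lemma~\ref{speccon1}. The key budget is $\|M\|_2^2=\sum_i\lambda_i(M)^2\le\|M\|_\infty^2\le 1$, so only finitely many disjoint eigenvalue bands can each carry significant $L_2$-mass. First I would fix, independently of $M$, a sequence $1=\lambda_0>\lambda_1>\cdots>\lambda_K$ with $\lambda_{i+1}\le F(\lambda_i,\epsilon)/2$ and $K>4\epsilon^{-2}$, and set $\delta:=\lambda_K$. Given $M$, let $m_i:=\|[M]_{\lambda_{i+1}}-[M]_{\lambda_i}\|_2^2$; the bands $(\lambda_{i+1},\lambda_i]$ are spectrum-disjoint, so $\sum_i m_i\le\|M\|_2^2\le 1$ and pigeonhole gives some $i^*$ with $m_{i^*}\le\epsilon^2/4$. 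Set $\lambda=\lambda_{i^*}\ge\delta$ and $\lambda'=\lambda_{i^*+1}\le F(\lambda,\epsilon)/2$.

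The raw spectral triple $S=[M]_\lambda$, $E_0=[M]_{\lambda'}-[M]_\lambda$, $R_0=M-[M]_{\lambda'}$ already delivers $(1)$, $(2)$, $(3)$ and $(5)$: $\|E_0\|_2=\sqrt{m_{i^*}}\le\epsilon/2$, $\|R_0\|_\square\le\lambda'$ by Lemma~\ref{cutapprox}, and self-adjointness is clear. What fails is condition $(4)$: Lemma~\ref{eigenmax} plus Cauchy-Schwarz bound $\|[M]_{\lambda'}\|_\infty$ only by $(\lambda')^{-2}$, which can be far above $1$. To force $(4)$, I would replace $[M]_{\lambda'}$ by its pointwise radial clip $W:=[M]_{\lambda'}/\max(1,|[M]_{\lambda'}|)$, and set $E:=W-S$, $R:=M-W$; then $\|S+E\|_\infty=\|W\|_\infty\le 1$ automatically and $W$ is still self-adjoint. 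The correction $Q:=[M]_{\lambda'}-W$ satisfies the pointwise bound $|Q|=(|[M]_{\lambda'}|-1)_+\le|[M]_{\lambda'}-M|$ (using $|M|\le 1$ a.e.), which yields $\|Q\|_2\le\|M-[M]_{\lambda'}\|_2$ and, since $\mu$ is a probability measure, $\|Q\|_\square\le\|Q\|_1\le\|Q\|_2$.

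The theorem thus reduces to ensuring $\|M-[M]_{\lambda'}\|_2\le\min(\epsilon,F(\lambda,\epsilon))/4$, so that both $\|E\|_2\le\epsilon$ and $\|R\|_\square\le F(\lambda,\epsilon)$ survive the clipping. This is the main obstacle, since a single pigeonhole on $M$ cannot uniformly supply such a small tail---the $L_2$-mass of $M$ below any fixed positive threshold can be close to $1$. I would close the gap by a compactness contradiction: assuming the theorem fails for some $(F,\epsilon)$, pick counterexamples $M_n$ with $\delta_n\downarrow 0$ and pass, via Theorem~\ref{compactness}, to a $\delta_\square$-convergent subsequence $M_n\to M^*$. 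Since $\spec(M^*)$ is countable, perturb $\lambda,\lambda'$ slightly so that they avoid $\spec(M^*)$ (the pigeonhole applied to $M^*$ still works), and---because only $\lambda\ge\delta$ is required---shrink $\lambda'$ further, depending on $M^*$, so that $\|M^*-[M^*]_{\lambda'}\|_2$ is as small as needed. Lemma~\ref{speccon1} then gives $[M_n]_\lambda\to[M^*]_\lambda$ and $[M_n]_{\lambda'}\to[M^*]_{\lambda'}$ in $L_2$, which transports the clipped decomposition from $M^*$ to $M_n$ for $n$ large with all five conditions intact, contradicting the failure of $M_n$. The subtle step is propagating the $L_2$-convergences cleanly through the pointwise clip so that $\|Q_n\|_\square\to\|Q^*\|_\square$ and stays small; this should follow from the $L_2$-Lipschitz property of the clipping map together with Lemma~\ref{trivweak}, but is where I expect the proof to require the most care.
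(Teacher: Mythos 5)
Your overall skeleton (proof by contradiction via Theorem~\ref{compactness} plus the spectral continuity of Lemma~\ref{speccon1}) is the same as the paper's, and your diagnosis of where the naive spectral truncation fails --- condition (4) --- is exactly right. But the repair does not close. The clip correction $Q=[M]_{\lambda'}-W$ is forced into $E$ (putting it into $R$ would leave $S+E=[M]_{\lambda'}$ unclipped, destroying (4), and your only handle on $\|Q\|_\square$ goes through $\|Q\|_2$ anyway), so you need $\|Q\|_2\le\|M-[M]_{\lambda'}\|_2$ small, i.e.\ the \emph{entire} spectral tail of $M$ below $\lambda'$ must be $L_2$-small. Your fix is to shrink $\lambda'$ until $\|M^*-[M^*]_{\lambda'}\|_2$ is small for the limit $M^*$. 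But $E$ also contains the band $[M]_{\lambda'}-[M]_\lambda$, whose $L_2$ mass converges (by Lemma~\ref{speccon1}) to $\|[M^*]_{\lambda'}-[M^*]_\lambda\|_2^2=\|M^*-[M^*]_\lambda\|_2^2-\|M^*-[M^*]_{\lambda'}\|_2^2$, which after the shrinking is essentially the full tail of $M^*$ below $\lambda$ --- no longer the pigeonhole band, and not bounded by $\epsilon^2/4$. The two demands on $\lambda'$ (band mass small for (2), tail mass small for the clip) pull in opposite directions, and as written condition (2) is not established.

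The missing observation, which is the paper's proof, is that once the limit $M^*$ is in hand you should choose the \emph{upper} threshold $\lambda$ (off $\spec(M^*)$) so that the whole tail satisfies $\|M^*-[M^*]_\lambda\|_2\le\epsilon/3$; this is possible for the single fixed operator $M^*$ and is compatible with $\lambda\ge\delta$ because $\lambda$ is fixed while $\delta=1/i\to 0$ along the contradiction sequence. Then no clip and no second threshold are needed: set $S_i=[M_i^{\psi_i}]_\lambda$, $E_i=M^*-[M_i^{\psi_i}]_\lambda$ and $R_i=M_i^{\psi_i}-M^*$, so that $S_i+E_i=M^*$ makes (4) automatic, $\|E_i\|_2\le\|M^*-[M^*]_\lambda\|_2+\|[M^*]_\lambda-[M_i^{\psi_i}]_\lambda\|_2\le 2\epsilon/3$ by Lemma~\ref{speccon1}, and $\|R_i\|_\square\to 0$ gives (3). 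The point you are missing is that $E$ need not be any spectral (or clipped spectral) piece of $M_i$ itself: the quasirandom $L_2$-mass of $M_i$ below $\lambda$ is allowed to sit in $R$, where only its cut norm matters, and $E$ only has to bridge $[M_i]_\lambda$ to the $L_\infty$-bounded object $M^*$. With that choice your opening pigeonhole on a predetermined cutoff sequence becomes superfluous.
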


\begin{proof} We go by contradiction. Let $\epsilon>0$ be a real number such that the theorem fails for $\epsilon$. This means that there is a sequence of kernel operators $\{M_i\}_{i=1}^\infty$ with $L_\infty$ norm at most $1$ such that $M_i$ does not have the desired decomposition for $\delta=1/i$.  We can assume without loss of generality that all the operators $M_i$ are defined on the same standard probability space $V$. Also without loss of generality (by choosing a subsequence guaranteed by theorem \ref{compactness}) we can assume that $\{M_i\}_{i=1}^\infty$ is convergent in $\delta_\square$ and so there is a sequence of invertible measure preserving maps $\{\psi_i\}_{i=1}^\infty$ on $V$ such that $\{M_i^{\psi_i}\}_{i=1}^\infty$ converges to $M$ with $\|M\|_\infty\leq 1$ in the cut norm.
Let $\lambda>0$ be a number such that $\{\lambda,-\lambda\}\cap\spec{M}=\emptyset$ and $\|[M]_\lambda-M\|_2\leq\epsilon/3$. By lemma \ref{speccon1} there is an index $i_0$ such that for $i>i_0$ we have $\|[M_i^{\psi_i}]_\lambda-[M]_\lambda\|_2\leq\epsilon/3$. This means that if $i>i_0$ the $\|M-[M_i^{\psi_i}]_\lambda\|_2\leq2\epsilon/3$. Let $E_i=M-[M_i^{\psi_i}]_\lambda$ and $R_i=M^{\psi_i}_i-M$. Now $M^{\psi_i}_i=[M^{\psi_i}_i]_\lambda+E_i+R_i$. Now since $R_i$ converges to $0$ in the cut norm it follows that there is an index $i_1>\max(i_0,1/\lambda)$ such that if $i>i_1$ then $\|R_i\|_\square<F(\lambda,\epsilon)$ satisfies the theorem with $1/i$. Applying $\psi^{-1}$ to the decomposition of $M_i^{\psi_i}$ we get a contradiction.
\end{proof}

Now let us assume that $V$ is a finite probability space with uniform distribution.
We can represent undirected graphs on the vertex set $V$ by their adjacency matrices $G:V\times V\rightarrow\{0,1\}$. More generally assume that $G:V\times V\rightarrow\mathbb{R}$ is a symmetric matrix. The automorphism group ${\rm Aut}(G)$ is the group of permutation matrices $g$ satisfying $gGg^{-1}=G$. A matrix $G:V\times V\rightarrow\mathbb{R}$ is called a step function with $n$-steps if there is a partition $\mathcal{P}=\{P_1,P_2,\dots,P_n\}$ of $V$ and an $n$ by $n$ matrix $T:[n]^2\rightarrow\mathbb{R}$ such that $G(a,b)=T(i,j)$ whenever $a\in P_i$ and $b\in P_j$.
We say that $G$ is a balanced step function with $n$ steps if $||P_i|-|P_j||\leq 1$ for every $i,j$.

\begin{remark} Let $C\subseteq\mathcal{H}$ be a convex, $L_\infty$-bounded closed set which is invariant under measure preserving maps on $V$. Then similar regularity lemma holds for every $M$ in $C$ such that $S+E\in C$. In particular if $C$ is the set of kernel operators taking values in $[0,1]$ then $M$ is a graphon and so is $S+E$. The proof is essentially the same.
\end{remark}

\begin{lemma}[Eigenvector clustering] If $\epsilon>0$ and $G=\sum_{i=1}^k f_if_i^*\lambda_i$ such that $\|f_i\|_\infty\leq m$ and $|\lambda_i|\leq m$ for every $1\leq i\leq k$ then there is a step function $T$ with at most $(20km^3/\epsilon)^k$ steps such that $\|T-G\|_\infty\leq\epsilon$.
\end{lemma}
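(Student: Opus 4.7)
The plan is to construct $T$ by clustering points of $V$ according to the joint values of the eigenfunctions $(f_1,\dots,f_k)$, then defining $T$ as a constant on each product of clusters. The intuition is that $G(x,y)=\sum_i \lambda_i f_i(x)f_i(y)$ is essentially a continuous function of the vector $(f_1(x),\dots,f_k(x))\in[-m,m]^k$, so discretizing this vector gives a step function approximation.

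Concretely, I would first fix a discretization parameter $\delta>0$ to be chosen later, and partition $[-m,m]$ into at most $2m/\delta$ intervals of length $\delta$. Pulling back through the map $x\mapsto (f_1(x),\dots,f_k(x))$ yields a partition $\mathcal{P}=\{P_1,\dots,P_N\}$ of $V$ with $N\leq (2m/\delta)^k$ nonempty cells. On each block $P_a\times P_b$, pick representatives $x_a\in P_a$, $x_b\in P_b$ and define $T(x,y)=G(x_a,x_b)$.

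To control the error, if $x\in P_a$ and $y\in P_b$ then $|f_i(x)-f_i(x_a)|\leq\delta$ and $|f_i(y)-f_i(x_b)|\leq\delta$ for each $i$, and using $\|f_i\|_\infty\leq m$ one gets the telescoping bound
\[
|f_i(x)f_i(y)-f_i(x_a)f_i(x_b)|\leq |f_i(x)-f_i(x_a)|\,|f_i(y)|+|f_i(x_a)|\,|f_i(y)-f_i(x_b)|\leq 2m\delta.
\]
Multiplying by $|\lambda_i|\leq m$ and summing over $i$ gives $|G(x,y)-T(x,y)|\leq 2km^2\delta$. Choosing $\delta=\epsilon/(2km^2)$ makes this $\leq\epsilon$, while the number of steps is at most
\[
\left(\frac{2m}{\delta}\right)^k=\left(\frac{4km^3}{\epsilon}\right)^k\leq\left(\frac{20km^3}{\epsilon}\right)^k.
\]

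There is no real obstacle here; the only mild care needed is the Lipschitz-type telescoping estimate for the bilinear form $\sum_i \lambda_i f_i(x)f_i(y)$ and keeping track of the constants to fit under $(20km^3/\epsilon)^k$. The constants are loose enough that one need not optimize.
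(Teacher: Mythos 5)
Your proposal is correct and follows essentially the same route as the paper: partition $V$ by discretizing the vector $(f_1(v),\dots,f_k(v))$ into cells of side $\delta$, take $T$ constant on each product cell, and bound the oscillation of $\sum_i\lambda_i f_i(x)f_i(y)$ on a cell by a telescoping estimate of order $km^2\delta$. Your constant $2km^2\delta$ is even slightly sharper than the paper's rough $10km^2\epsilon_1$, and both comfortably fit under the stated bound $(20km^3/\epsilon)^k$.
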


\begin{proof} Let us consider the partition $V=\cup_{i=1}^t P_i$ according to the level sets of the function $$v\mapsto (\lfloor f_i(v)\epsilon_1^{-1}\rfloor\epsilon_1)_{i=1}^k.$$
Here $t\leq (2m/\epsilon_1)^k$
If two element $v_1,v_2\in P_i$ and $w_1,w_2\in P_j$ then with a rough estimate $|T(v_1,w_1)-T(v_2,w_2)|\leq 10km^2\epsilon_1$. It follows that there is step function $T$ with partition $\{P_i\}_{i=1}^t$ such that $\|T-G\|_\infty\leq 10km^2\epsilon_1$. If $\epsilon_1\leq\epsilon/(10km^2)$ then $T$ satisfies the condition of the lemma.
\end{proof}

Using the fact that in theorem \ref{specreg} the matrix $[M]_\lambda$ is invariant under the automorphisms of $G$ and the previous lemma we obtain the next version of the classical graph regularity lemma.

\begin{theorem}[Symmetry preserving regularity lemma] For an arbitrarily decreasing function $F(\mathbb{R}^+,\mathbb{N})\rightarrow\mathbb{R}^+$ and $\epsilon>0$ there is constant $n$ such that for every symmetric matrix $G:V\times V\rightarrow [-1,1]$ there is decomposition
$G=S+E+R$ such that
\begin{enumerate}
\item $S$ is a step function with $s\leq n$ steps
\item $\|gSg^{-1}-S\|_\infty\leq\epsilon$ for every $g\in{\rm Aut}(G)$
\item $\|E\|_2\leq\epsilon$
\item $\|R\|_\square\leq F(\epsilon,s)$.
\end{enumerate}
\end{theorem}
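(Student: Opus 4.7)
The plan is to derive this theorem from the Spectral regularity lemma (Theorem \ref{specreg}) combined with the Eigenvector clustering lemma, the key observation being that the spectral truncation $[G]_\lambda$ is \emph{exactly} (not just approximately) invariant under every automorphism of $G$.

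First I would fix $\epsilon_1 := \epsilon/2$ and introduce the bookkeeping quantity
$$s_{\max}(\lambda) := (40 \lambda^{-5}/\epsilon)^{\lambda^{-2}},$$
which will serve as an a priori upper bound on the number of steps produced by the Eigenvector clustering lemma applied to any $[G]_\lambda$ with tolerance $\epsilon_1$. I would then define the auxiliary function $F'(\lambda,\eta) := F(\epsilon, \lceil s_{\max}(\lambda) \rceil)$ and apply Theorem \ref{specreg} to $G$ with this $F'$ and tolerance $\epsilon/2$. This produces a threshold $\lambda \geq \delta$ (with $\delta = \delta(F,\epsilon) > 0$) and a decomposition $G = [G]_\lambda + E_0 + R_0$ satisfying $\|E_0\|_2 \leq \epsilon/2$ and $\|R_0\|_\square \leq F'(\lambda, \epsilon/2)$. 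The final constant is $n := \lceil s_{\max}(\delta) \rceil$.

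The symmetry step is this: if $g \in {\rm Aut}(G)$ then $g$ commutes with $G$ and therefore preserves every eigenspace $W_\tau = \{f : Gf = \tau f\}$. Since the basis-free formula $[G]_\lambda = \sum_{|\tau| > \lambda} \tau P_\tau$ depends only on these eigenspaces, $g[G]_\lambda g^{-1} = [G]_\lambda$ holds on the nose. I would then apply Eigenvector clustering to $[G]_\lambda = \sum_{|\lambda_i|>\lambda} f_i f_i^* \lambda_i$; Lemma \ref{eigenmax} gives $\|f_i\|_\infty \leq 1/\lambda$, while $|\lambda_i| \leq 1$ and the count $k \leq \lambda^{-2}$ both follow from $\sum_j \lambda_j^2 \leq \|G\|_\infty^2 \leq 1$. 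Taking $m := 1/\lambda$ and tolerance $\epsilon_1$ gives a symmetric step function $T$ with $s \leq s_{\max}(\lambda)$ steps satisfying $\|T - [G]_\lambda\|_\infty \leq \epsilon_1$.

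Finally I would set $S := T$, $E := E_0 + ([G]_\lambda - T)$, and $R := R_0$. Condition (1) is immediate with $s \leq s_{\max}(\lambda) \leq s_{\max}(\delta) \leq n$; (3) follows from $\|E\|_2 \leq \|E_0\|_2 + \|[G]_\lambda - T\|_\infty \leq \epsilon/2 + \epsilon/2 = \epsilon$ on the probability space $V$; (4) holds because $\|R\|_\square \leq F'(\lambda, \epsilon/2) = F(\epsilon, \lceil s_{\max}(\lambda)\rceil) \leq F(\epsilon, s)$ by monotonicity of $F$ in the second argument. The symmetry bound (2) follows from
$$\|gSg^{-1} - S\|_\infty \leq \|g(T - [G]_\lambda)g^{-1}\|_\infty + \|[G]_\lambda - T\|_\infty = 2\|T - [G]_\lambda\|_\infty \leq \epsilon,$$
using the exact invariance $g[G]_\lambda g^{-1} = [G]_\lambda$ and the fact that conjugation by a permutation matrix preserves the $L_\infty$ norm. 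The only mildly delicate point is the bookkeeping involved in choosing $F'$ so that the output $\lambda$ of the spectral regularity lemma is automatically small enough to make $F(\epsilon, s_{\max}(\lambda))$ the target bound; but since $s_{\max}$ is an explicit function of $\lambda$ and $\epsilon$, this is simply a definition, and no real obstacle arises.
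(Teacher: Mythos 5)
Your proof is correct and follows essentially the same route the paper sketches: apply the spectral regularity lemma (Theorem \ref{specreg}), use the exact ${\rm Aut}(G)$-invariance of $[G]_\lambda$ (Lemma \ref{inveigen}), and convert $[G]_\lambda$ into a step function via the Eigenvector clustering lemma, absorbing the $L_\infty$ clustering error into $E$ and into the $\epsilon$-approximate invariance of $S$. The only thing you add beyond the paper's one-line derivation is the explicit bookkeeping of $s_{\max}(\lambda)$ and the auxiliary function $F'$, which is exactly the right way to make the implicit quantifier ordering work.
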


At the cost of worsening the bound of $n$ in terms of $F$ and $\epsilon$ we can also assume that $T$ is a balanced step function. However in this case the $L_\infty$ error in $\|gSg^{-1}-S\|_\infty\leq\epsilon$ becomes an $L_2$ error.

\subsection{Eigenspace convergence}

Let $\mathcal{M}$ be the set of kernel operators $M:V\times V\rightarrow\mathbb{C}$ with $\|M\|_\infty\leq 1$.

\begin{proposition}\label{spacecon1} Let $\{M_i\}_{i=1}^\infty$ be a sequence in $\mathcal{M}$. Then the following two statements are equivalent.
\begin{enumerate}
\item $\{M_i\}_{i=1}^\infty$ is convergent in the cut norm
\item there is a decreasing positive real sequence $\{\alpha_i\}_{i=1}^\infty$ with $\lim_{i=1}^\infty\alpha_i=0$ such that $\{[M_i]_{\alpha_j}\}_{i=1}^\infty$ is $L_2$ convergent for every $j$.
\end{enumerate}
Furthermore in the second statement the cut norm limit $M$ of $\{M_i\}_{i=1}^\infty$ can be computed as $$M=\lim_{j\to\infty}(\lim_{i\to\infty} [M_i]_{\alpha_j})$$ converging in $L_2$.
\end{proposition}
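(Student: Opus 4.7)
The plan is to handle both implications and extract the limit formula along the way.

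For $(1) \Rightarrow (2)$: suppose $\{M_i\}$ converges in cut norm to some $M \in \mathcal{M}$. Because $\sum |\lambda|^2 = \|M\|_2^2 < \infty$, the spectrum $\spec(M)$ is countable, so the set of $\lambda > 0$ with $\{-\lambda,\lambda\} \cap \spec(M) = \emptyset$ is co-countable in $(0,\infty)$. I choose a strictly decreasing sequence $\alpha_j \to 0$ inside this set; Lemma \ref{speccon1} then gives $[M_i]_{\alpha_j} \to [M]_{\alpha_j}$ in $L_2$ for each $j$, which is condition (2). A tail estimate $\|M - [M]_{\alpha_j}\|_2^2 = \sum_{|\lambda| \le \alpha_j} |\lambda|^2 \to 0$ identifies $M = \lim_j \lim_i [M_i]_{\alpha_j}$ in $L_2$.

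For $(2) \Rightarrow (1)$: set $N_j := \lim_{i\to\infty} [M_i]_{\alpha_j}$ in $L_2$. The central step is to prove $\{N_j\}$ is Cauchy in $L_2$. For $j < k$, the operator $[M_i]_{\alpha_k} - [M_i]_{\alpha_j}$ is supported on eigenvectors with eigenvalues of modulus in $(\alpha_k, \alpha_j]$, hence is orthogonal in $L_2(V \times V)$ to $[M_i]_{\alpha_j}$. Pythagoras gives $\|[M_i]_{\alpha_k}\|_2^2 = \|[M_i]_{\alpha_j}\|_2^2 + \|[M_i]_{\alpha_k} - [M_i]_{\alpha_j}\|_2^2$, and sending $i \to \infty$ using the assumed $L_2$ convergences yields $\|N_k - N_j\|_2^2 = \|N_k\|_2^2 - \|N_j\|_2^2$. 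Since $\|M_i\|_2 \le \|M_i\|_\infty \le 1$, each $\|N_j\|_2 \le 1$, so the monotone sequence $\{\|N_j\|_2\}$ converges and $\{N_j\}$ is Cauchy. Let $M := \lim_j N_j$ in $L_2$; this is the claimed formula.

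It remains to show $\|M_i - M\|_\square \to 0$. I split
$$\|M_i - M\|_\square \le \|M_i - [M_i]_{\alpha_j}\|_\square + \|[M_i]_{\alpha_j} - N_j\|_\square + \|N_j - M\|_\square.$$
Lemma \ref{cutapprox} bounds the first term by $\alpha_j$, and (\ref{caeq}) yields $\|\cdot\|_\square \le \|\cdot\|_2$ on this probability space, so the last two terms are controlled by their $L_2$ analogues and vanish as $i \to \infty$ (for fixed large $j$) and $j \to \infty$ respectively. Given $\epsilon > 0$, I first pick $j$ with $\alpha_j + \|N_j - M\|_2 < \epsilon/2$, then $i$ large with $\|[M_i]_{\alpha_j} - N_j\|_2 < \epsilon/2$. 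The step I expect to be the main obstacle is the $L_2$-Cauchyness of $\{N_j\}$: a naive tail bound $\|M_i - [M_i]_{\alpha_j}\|_2 \to 0$ does not hold uniformly in $i$, since many small eigenvalues may accumulate nontrivial $L_2$ mass, so the orthogonality of spectral slices together with the monotone bound on $\|N_j\|_2$ is doing the essential work.
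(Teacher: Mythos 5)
Your proof is correct and follows essentially the same route as the paper: the forward direction via Lemma \ref{speccon1} with $\alpha_j$ chosen off the spectrum of $M$, and the converse via orthogonality of the spectral slices plus Pythagoras to get $L_2$-convergence of $N_j$, followed by the same three-term cut-norm split using Lemma \ref{cutapprox}. The only (valid) micro-variation is that you bound the tail term by $\|N_j-M\|_\square\leq\|N_j-M\|_2\to 0$, whereas the paper bounds it by the spectral radius $\alpha_{j}$ of the limit tail; both work, and yours slightly shortens the verification that the slice properties pass to the limit.
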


\begin{proof} To show that the first statement implies the second one let $S$ be the set of eigenvalues of the cut norm limit $M$ of $\{M_i\}_{i=1}^\infty$. Then by lemma \ref{speccon1} any sequence $\{\alpha_i\}_\infty$ avoiding the absolute values of the eigenvalues of $M$ satisfies the convergence requirement second statement. The eigenvalues of $M$ are forming a countable set and so we can choose $\{\alpha_i\}_{i=1}^\infty$ satisfying the required conditions.

Let $L_j=\lim_{i\to\infty}[M_i]_{\alpha_j}$ and $D_j=L_{j+1}-L_j$.
The functions $D_{i,j}=[M_i]_{\alpha_{j+1}}-[M_i]_{\alpha_j}$ satisfy the following properties for every $i$
\begin{enumerate}
\item $D_{i,j}D_{i,k}=0$ for every $i\neq k$,
\item $(D_{i,j},D_{i,k})=0$ in $\hth$,
\item the spectral radius of $D_{i,j}$ is at most $\alpha_j$,
\item $\sum_{j=1}^\infty \|D_{i,j}\|_2^2=\|M_i\|_2^2\leq 1$.
\end{enumerate}

This means that the sequence $\{D_j\}_{j=1}^\infty$ satisfies the same properties.
Using that last and the second property we have that $\sum_{j=1}^\infty D_j$ is convergent in the $L_2$ norm. Let us denote the limit by $M$. Our goal is to show that $\{M_i\}_{i=1}^\infty$ converges to $M$ in the cut norm.

The first and third property implies that the spectral radius of $N_j=\sum_{k=j}^\infty D_k$ is at most $\alpha_j$ and so
$$\|M-M_i\|_\square\leq\|(M-N_{j+1})-[M_i]_{\alpha_j}\|_\square+\|M_i-[M_i]_{\alpha_j}\|_\square+\|N_{j+1}\|_\square\leq$$
$$\leq\|L_j-[M_i]_{\alpha_j}\|_2+2\alpha_j.$$
Now using the fact that $[M_i]_{\alpha_j}$ converges to $L_j$ we get that if $i$ is big enough then $\|M-M_i\|_\square\leq 3\alpha_j$. Since $\{\alpha_j\}_{i=1}^\infty$ converges to $0$ the proof is complete.
\end{proof}

\begin{lemma}\label{limsuplem} If $\{M_i\}_{i=1}^\infty$ in $\mathcal{M}$ converges to $M$ in the $\delta_\square$ distance then for every $\lambda>0$ we have $\limsup_{i\to\infty}\|[M]_\lambda\|_2\leq \|M\|_2$.
\end{lemma}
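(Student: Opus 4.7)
\smallskip\noindent\textbf{Proof plan.} I will read the statement as asserting that $\limsup_{i\to\infty}\|[M_i]_\lambda\|_2\leq\|M\|_2$ (the other reading makes the $\limsup$ vacuous). The plan is to reduce to cut-norm convergence, perturb $\lambda$ off the spectrum of $M$, and then invoke Lemma~\ref{speccon1}.

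First, I would remove the $\delta_\square$ infimum. By definition of $\delta_\square$ there exist invertible measure preserving maps $\psi_i:V\to V$ such that $M_i^{\psi_i}\to M$ in the cut norm. Since $\psi_i$ is measure preserving we have $\|M_i^{\psi_i}\|_2=\|M_i\|_2$, and, because the spectral decomposition is transported by conjugation with $\psi_i$, also $\|[M_i^{\psi_i}]_\lambda\|_2=\|[M_i]_\lambda\|_2$. Hence it suffices to prove the inequality under the assumption that $M_i\to M$ in the cut norm.

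Next I would fix a convenient threshold $\lambda'\in(0,\lambda)$ with $\{\lambda',-\lambda'\}\cap\spec(M)=\emptyset$. Such a $\lambda'$ exists because the nonzero part of the spectrum of the Hilbert-Schmidt operator $M$ is countable (its only possible accumulation point is $0$), so its complement is dense in $(0,\lambda)$. Because the spectral decomposition consists of pairwise orthogonal rank-one pieces, and $[M_i]_\lambda$ is obtained from $[M_i]_{\lambda'}$ by throwing away the components with $\lambda'<|\lambda_{i,j}|\leq\lambda$, we have the pointwise comparison
$$\|[M_i]_\lambda\|_2\leq \|[M_i]_{\lambda'}\|_2$$
for every $i$.

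Finally, Lemma~\ref{speccon1} applied at the level $\lambda'$ (which avoids $\spec(M)$) yields $\|[M_i]_{\lambda'}-[M]_{\lambda'}\|_2\to 0$, so $\|[M_i]_{\lambda'}\|_2\to\|[M]_{\lambda'}\|_2$. Since $[M]_{\lambda'}$ is a partial sum of mutually orthogonal spectral pieces of $M$, we trivially have $\|[M]_{\lambda'}\|_2\leq\|M\|_2$. Chaining these bounds gives
$$\limsup_{i\to\infty}\|[M_i]_\lambda\|_2\;\leq\;\lim_{i\to\infty}\|[M_i]_{\lambda'}\|_2\;=\;\|[M]_{\lambda'}\|_2\;\leq\;\|M\|_2,$$
which is the desired inequality.

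The only real subtlety is the cut-norm reduction step: one must check that conjugation by a measure preserving map commutes with the operation $[\,\cdot\,]_\lambda$ (which follows from the fact that $f\mapsto f\circ\psi^{-1}$ is a unitary transporting the spectral decomposition of $M$ to that of $M^\psi$) and preserves $L_2$ norms. Everything else is immediate from Lemma~\ref{speccon1} and the orthogonality of eigenspaces of a self adjoint Hilbert-Schmidt operator.
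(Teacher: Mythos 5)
Your proof is correct and follows essentially the same route as the paper's: pass to cut-norm convergence via measure preserving maps, pick a threshold $\alpha\in(0,\lambda)$ avoiding $\spec(M)$, use Lemma~\ref{speccon1} at that level, and conclude via the monotonicity $\|[M_i]_\lambda\|_2\leq\|[M_i]_\alpha\|_2$ and $\|[M]_\alpha\|_2\leq\|M\|_2$. You correctly read the statement as being about $\|[M_i]_\lambda\|_2$ (the printed $\|[M]_\lambda\|_2$ is a typo), and your write-up is in fact more careful than the paper's about why such an $\alpha$ exists and why the monotone comparison holds.
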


\begin{proof} We can choose a sequence of invertible measure preserving maps $\{\psi_i\}_{i=1}^\infty$ such that $\{M_i^{\psi_i}\}_{i=1}^\infty$ is convergent in the cut norm. By lemma \ref{speccon1} there is a value $0<\alpha<\lambda$ such that $\lim_{i\to\infty}[M_i]_\alpha=[M]_\alpha$ in $L_2$. This means that $\limsup_{i\to\infty}\|[M_i]_\lambda\|_2\leq\lim_{i\to\infty}\|[M_i]_\alpha\|_2\leq\|M_2\|_2$.
\end{proof}

\begin{proposition} Let $\{M_i\}_{i=1}^\infty$ be a sequence in $\mathcal{M}$. Then the following two statements are equivalent.
\begin{enumerate}
\item $\{M_i\}_{i=1}^\infty$ is convergent in the $\delta_\square$ distance
\item there is a decreasing positive real sequence $\{\alpha_i\}_{i=1}^\infty$ with $\lim_{i=1}^\infty\alpha_i=0$ such that $\{[M_i]_{\alpha_j}\}_{i=1}^\infty$ is $\delta_1$ convergent for every $j$.
\end{enumerate}
Furthermore in the second statement the cut norm limit $M$ of $\{M_i\}_{i=1}^\infty$ can be computed as $$M=\lim_{j\to\infty}(\lim_{i\to\infty} [M_i]_{\alpha_j})$$ converging in $\delta_1$.
\end{proposition}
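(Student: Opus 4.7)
My plan is to view this as the $\delta_\square$-analogue of Proposition~\ref{spacecon1} and to run the two directions in parallel with that earlier proof. The key observation throughout is that for any measure preserving map $\psi$ the spectral truncation commutes with the action, i.e.\ $[M^\psi]_\alpha=[M]_\alpha^\psi$.

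For (1) $\Rightarrow$ (2) the argument is short. I take invertible measure preserving maps $\psi_i$ with $M_i^{\psi_i}\to M$ in cut norm (guaranteed by the definition of $\delta_\square$-convergence) and apply Proposition~\ref{spacecon1} to $\{M_i^{\psi_i}\}$. This supplies a decreasing sequence $\alpha_j\to 0$ avoiding the absolute spectrum of $M$ such that $[M_i^{\psi_i}]_{\alpha_j}\to[M]_{\alpha_j}$ in $L_2$ for every $j$ and $M=\lim_j[M]_{\alpha_j}$ in $L_2$. Commuting the truncation past $\psi_i$ turns the inner $L_2$ convergence into $\delta_2$ convergence of $[M_i]_{\alpha_j}$ to $[M]_{\alpha_j}$, and on a probability space $\|\cdot\|_1\le\|\cdot\|_2$ gives $\delta_1$ convergence. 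The outer $L_2$ convergence also implies $L_1$ (and hence $\delta_1$) convergence $[M]_{\alpha_j}\to M$.

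For (2) $\Rightarrow$ (1), let $N_j$ denote the $\delta_1$-limit of $\{[M_i]_{\alpha_j}\}_i$. The first step is to show that $\{N_j\}$ is Cauchy in $\delta_\square$. For $j<k$ the difference $[M_i]_{\alpha_j}-[M_i]_{\alpha_k}$ is a self adjoint operator supported on eigenvalues of absolute value in $(\alpha_k,\alpha_j]$, hence its spectral radius is at most $\alpha_j$ and Lemma~\ref{specrad} gives $\|[M_i]_{\alpha_j}-[M_i]_{\alpha_k}\|_\square\le\alpha_j$. Combining this with $\delta_\square\le\delta_1$ and $[M_i]_{\alpha_j}\to N_j$ in $\delta_1$, a triangle inequality (letting $i\to\infty$) yields $\delta_\square(N_j,N_k)\le\alpha_j$. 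By Theorem~\ref{compactness} there is $N$ with $N_j\to N$ in $\delta_\square$ and $\delta_\square(N_j,N)\le\alpha_j$. A second triangle inequality
\[\delta_\square(M_i,N)\le\|M_i-[M_i]_{\alpha_j}\|_\square+\delta_1([M_i]_{\alpha_j},N_j)+\delta_\square(N_j,N),\]
with Lemma~\ref{cutapprox} bounding the first term by $\alpha_j$, shows $\limsup_i\delta_\square(M_i,N)\le 2\alpha_j$, and letting $j\to\infty$ gives $M_i\to N$ in $\delta_\square$, so (1) holds with limit $M=N$.

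The last thing to establish is the ``furthermore'' claim, that $N_j\to M$ in $\delta_1$. Since (1) has now been proved, I feed the limit $M$ back into the direction (1) $\Rightarrow$ (2) to conclude that for every $\alpha_j$ lying outside the countable absolute spectrum of $M$ one has $N_j=[M]_{\alpha_j}$ in the $\delta_1$-equivalence class, and $[M]_{\alpha_j}\to M$ in $L_2$ (as the tail $\sum_{|\lambda_k|\le\alpha_j}\lambda_k^2$ of a convergent series) hence in $L_1$. The main obstacle is to handle the indices $j$ for which $\alpha_j$ happens to coincide with an absolute eigenvalue of $M$; at such levels $N_j$ need not be equivalent to $[M]_{\alpha_j}$ but can differ only by a piece supported on the $\pm\alpha_j$-eigenspace. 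I plan to control this discrepancy by sandwiching $\alpha_j$ between nearby good thresholds and applying Lemma~\ref{limsuplem} to bound the $L_2$-mass of the offending spectral band. This is the only step where serious bookkeeping is required; the rest is a direct transcription of the cut-norm proof of Proposition~\ref{spacecon1}, with $\delta$-metrics replacing the corresponding norms.
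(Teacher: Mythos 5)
Your proof of the main equivalence follows the paper's route almost exactly: the (1) $\Rightarrow$ (2) direction via measure preserving maps and Proposition \ref{spacecon1} (with the observation $[M^\psi]_\alpha=[M]_\alpha^\psi$, which the paper leaves implicit), and the (2) $\Rightarrow$ (1) direction via the spectral-radius bound on the bands, Lemma \ref{cutapprox}, and a Cauchy-plus-compactness argument. Where you diverge is the ``furthermore'' clause, and this is also the only place where your write-up is a plan rather than a proof. The paper's treatment is slicker: it never tries to identify $N_j$ with $[M]_{\alpha_j}$. From the Cauchy argument one already has $\delta_\square(N_j,M)\leq\alpha_j$ (you derive exactly this bound yourself), so $N_j\to M$ in $\delta_\square$ comes for free; the paper then upgrades $\delta_\square$ to $\delta_1$ by Lemma \ref{corhv2}, using Lemma \ref{limsuplem} to get $\|N_j\|_2\leq\|M\|_2$ (weak lower semicontinuity of the $L_2$ norm supplies the reverse inequality). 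Your alternative --- identifying $N_j$ with $[M]_{\alpha_j}$ at thresholds avoiding the absolute spectrum and sandwiching the bad thresholds --- does go through: for a good threshold $\gamma_j\in(\alpha_j,\alpha_{j-1})$ one gets, by orthogonality of the spectral bands and Lemma \ref{limsuplem}, $\limsup_i\|[M_i]_{\alpha_j}-[M_i]_{\gamma_j}\|_2^2\leq\|M\|_2^2-\|[M]_{\gamma_j}\|_2^2=\|M-[M]_{\gamma_j}\|_2^2\to 0$, which bounds $\delta_2(N_j,M)$ and hence $\delta_1(N_j,M)$. So your approach is viable, but it costs you the extra bookkeeping you anticipate, whereas the estimate you already proved in the Cauchy step, combined with Lemmas \ref{limsuplem} and \ref{corhv2}, finishes immediately; I would recommend replacing the sketched sandwich with that shorter ending.
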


\begin{proof} To see that the first statement implies the second choose a sequence of invertible measure preserving transformations $\{\psi_i\}_{i=1}^\infty$ such that $\{M_i^{\psi_i}\}_{i=1}^\infty$ converges in the cut norm. Then proposition \ref{spacecon1} shows the second statement.

We show that the second statement implies the first. $L_j=\lim_{i\to\infty}[M_i]_{\alpha_j}$. Observe that by lemma \ref{cutapprox} $\|M_i-[M_i]_{\alpha_j}\|_\square\leq\alpha_j$ and so $\delta_\square(M_i,[M_i]_{\alpha_j})\leq\alpha_j$. If $i$ is big enough that $\delta_\square(M_i,L_j)\leq 2\alpha_j$. This means that $\{M_i\}_{i=1}^\infty$ is a $\delta_\square$ Cauchy sequence which shows the first statement.
It also shows that $\{L_j\}_{j=1}^\infty$ converges to the cut norm limit $M$ of $\{M_i\}_{i=1}^\infty$ in the $\delta_{\square}$-distance.
It remains to show that this convergence is also true in the $\delta_1$ metric.
This follows from the fact that by lemma \ref{limsuplem} $\|L_j\|_2\leq\|M\|_2$ and so lemma \ref{corhv2} completes the proof.
\end{proof}

\subsection{Regularization and group actions}

An advantage of the spectral regularity lemma is that it is invariant under the symmetries of kernel operators.
To be more precise let $G$ be a group of unitary operators on $\mathcal{H}=L_2(V)$.
Then there is a natural induced action of $G$ on $\hth$ and in particular on the set of self adjoint Hilbert-Schmidt kernel operators. This action satisfies $H^\alpha f^\alpha=(Hf)^\alpha$ and $(f^\alpha)^*H^\alpha=(f^*H)^\alpha$. If $H=\sum_i f_if_i^*\lambda_i$ is a spectral decomposition of $H$ then $H^\alpha=\sum_i f_i^\alpha(f_i^\alpha)^*\lambda_i$.
The next well known lemma is trivial from the previous remarks.

\begin{lemma}\label{inveigen} If a group $G$ of unitary operators on $L_2(V)$ stabilizes a self adjoint Hilbert-Schmidt kernel operator $H$ ($H^\alpha=H$ for every $\alpha\in G$) then $G$ stabilizes all the operators $[H]_\lambda$ for $\lambda\geq 0$.
Furthermore the eigenspaces $W_{\lambda_i}$ of $H$ are $G$ invariant spaces. In particular $\ran(H)$ is $G$ invariant.
\end{lemma}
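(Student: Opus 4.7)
The plan is to leverage the identity $H^\alpha f^\alpha = (Hf)^\alpha$, which is given just before the lemma, together with the basis-independent expression $[H]_\lambda = \sum_{|\tau|>\lambda}\tau P_\tau$ from the definition of $[H]_\lambda$.

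First I would establish that every eigenspace $W_\tau = \{f \mid Hf = \tau f\}$ is $G$-invariant. Fix $\alpha \in G$ and $f \in W_\tau$. Since $H^\alpha = H$, we have
\[
H f^\alpha = H^\alpha f^\alpha = (Hf)^\alpha = (\tau f)^\alpha = \tau f^\alpha,
\]
so $f^\alpha \in W_\tau$. Thus the $G$-action sends $W_\tau$ into itself, and since $\alpha$ is unitary (hence invertible with inverse also in $G$), $W_\tau$ is $G$-invariant.

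Next I would use this to show the orthogonal projection $P_\tau$ onto $W_\tau$ commutes with the $G$-action, in the sense that $P_\tau^\alpha = P_\tau$ as an operator (equivalently, as a kernel). Since $W_\tau$ is $G$-invariant and $\alpha$ is unitary, its orthogonal complement $W_\tau^\perp$ is also $G$-invariant; hence the decomposition $\mathcal{H} = W_\tau \oplus W_\tau^\perp$ is preserved by $\alpha$, which forces $\alpha P_\tau \alpha^{-1} = P_\tau$. Translating this into the kernel action gives $P_\tau^\alpha = P_\tau$. Applying this termwise to the basis-independent expression
\[
[H]_\lambda = \sum_{|\tau|>\lambda}\tau P_\tau
\]
and using linearity and continuity of the $G$-action on $\hth$, we obtain $[H]_\lambda^\alpha = [H]_\lambda$ for every $\lambda \geq 0$.

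Finally, $\ran(H)$ is the closed linear span of the eigenspaces $W_\tau$ with $\tau \neq 0$; each of these is $G$-invariant by the first step, so their closed span is as well. The only minor point to be careful about is the passage from invariance of each $W_\tau$ to invariance of $P_\tau$ as a kernel operator, but this is standard since $\alpha$ is unitary; no serious obstacle arises, and the remainder is bookkeeping with the spectral decomposition.
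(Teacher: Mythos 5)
Your proposal is correct and follows essentially the same route as the paper, which declares the lemma ``trivial from the previous remarks'' --- namely the identity $H^\alpha f^\alpha=(Hf)^\alpha$ and the termwise transformation of the spectral decomposition, which are exactly the facts you use. Your version just spells out the intermediate steps (invariance of eigenspaces, hence of the projections $P_\tau$, hence of $[H]_\lambda$ via the basis-independent formula) that the paper leaves implicit.
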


A typical example for a kernel operator stabilized by a group action is a Cayley graphon.
Let $G$ be a compact Hausdorff topological group with normalized Haar measure $\mu$. Let $f$ be a Borel measurable function $f:G\rightarrow\mathbb{C}$. Let $M:G\times G\rightarrow\mathbb{C}$ be the kernel operator defined by $M(x,y)=f(x^{-1}y)$. It is easy to see that the left action of $G$ on itself induces a unitary group action of $G$ on $L_2(G,\mu)$ and it stabilizes $M$. If $f$ has the property that $f(g^{-1})=\overline{f(g)}$ then the corresponding Cayley graphon is self adjoint.
The next corollary of lemma \ref{inveigen} creates a connection between quasi randomness and representation theory.

\begin{corollary}[Quasirandom action]\label{qra1} If a group $G$ of unitary operators on $L_2(V)$ stabilizes a self adjoint Hilbert-Schmidt operator $H$ with $\|H\|_2\leq 1$ then the spectral radius (and so the cut norm) of $H$ is at most $1/\sqrt{d}$ where $d$ is the smallest dimension of a $G$ invariant subspace in $\ran(H)$.
\end{corollary}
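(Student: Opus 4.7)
The plan is to locate the eigenvalue of $H$ realizing the spectral radius, observe that its eigenspace sits inside $\ran(H)$ and is $G$-invariant, and then bound the radius using the Hilbert-Schmidt norm identity $\|H\|_2^2=\sum_i\lambda_i^2$.

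First I would pick $\lambda$ among the eigenvalues of $H$ with $|\lambda|=\rad(H)$; such an eigenvalue exists because $H$ is a self adjoint Hilbert-Schmidt operator, hence compact, so its spectrum accumulates only at $0$ and the maximum of $|\lambda_i|$ is attained. If $\rad(H)=0$ the bound is trivial, so assume $\lambda\neq 0$. Let $W_\lambda=\{f\in\mathcal{H}\mid Hf=\lambda f\}$ be the corresponding eigenspace. Because $\lambda\neq 0$, every $f\in W_\lambda$ satisfies $f=\lambda^{-1}Hf\in\ran(H)$, so $W_\lambda\subseteq\ran(H)$. By Lemma \ref{inveigen}, $W_\lambda$ is $G$-invariant, and therefore $\dim(W_\lambda)\geq d$ by the definition of $d$.

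Next I would compare this dimension with the Hilbert-Schmidt norm. Listing eigenvalues with multiplicities, the eigenspace $W_\lambda$ alone contributes $\dim(W_\lambda)\cdot\lambda^2\geq d\lambda^2$ to the sum $\sum_i\lambda_i^2=\|H\|_2^2\leq 1$. Hence $\lambda^2\leq 1/d$, i.e. $\rad(H)=|\lambda|\leq 1/\sqrt{d}$. The cut norm bound then follows from Lemma \ref{specrad}, which gives $\|H\|_\square\leq\rad(H)$.

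There is no real obstacle: the argument is a two-line consequence of the already-established lemmas. The only point requiring a moment's care is the justification that $W_\lambda\subseteq\ran(H)$ (so that the hypothesis on minimal dimension of $G$-invariant subspaces of $\ran(H)$ can be applied), which is handled by the observation that the extremal eigenvalue must be nonzero when $H\neq 0$.
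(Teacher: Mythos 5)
Your proof is correct and follows essentially the same route as the paper: both arguments use Lemma \ref{inveigen} to see that the (nonzero) eigenspaces lie in $\ran(H)$ and are $G$-invariant, hence have dimension at least $d$, and then bound the extremal eigenvalue via $\sum_i\lambda_i^2=\|H\|_2^2\leq 1$, finishing with Lemma \ref{specrad} for the cut norm. Your version is slightly more careful than the paper's (which asserts the multiplicity bound for every eigenvalue without flagging the $\lambda\neq 0$ issue), but the substance is identical.
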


\begin{proof} Lemma \ref{inveigen} implies that every eigenvalue of $H$ has multiplicity at least $d$. Since the sum of the squares of the eigenvalues is at most $1$ we get that $\lambda^2d<1$ is satisfied by every eigenvalue $\lambda$.
\end{proof}

\begin{corollary}[Quasirandom action II.]\label{qra2} Let $G$ be a compact Hausdorff topological group with normalized Haar measure $\mu$. Let $K\subseteq G$ be a closed subgroup and $V$ be the left coset space $\{gK|g\in G\}$. Let $\mathcal{H}=L_2(V,\mu)$ and $d$ be the degree of the smallest non trivial representation of $G$ which appears in the induced action of $G$ on $\mathcal{H}$.
Then every $G$ invariant self adjoint Hilbert-Schmidt kernel operator $H$ with $\|H\|_2\leq 1$ satisfies $\|H-p\|_\square\leq 1/\sqrt{d}$ where $p$ is the constant function on $V\times V$ with value $\alpha=\int_{x,y}H(x,y)~dx~dy$.
\end{corollary}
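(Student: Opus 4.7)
The plan is to reduce this to Corollary \ref{qra1} applied to the operator $H-p$. Three things need to be checked about $H-p$: it is $G$-invariant and self-adjoint, it satisfies $\|H-p\|_2 \leq 1$, and every $G$-invariant subspace of its range has dimension at least $d$. Once these are in place, Lemma \ref{specrad} together with Corollary \ref{qra1} immediately yield $\|H-p\|_\square \leq \rad(H-p) \leq 1/\sqrt{d}$.

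First I would observe that $p$, being a constant function on $V\times V$, is fixed by the induced diagonal action of $G$, hence $H-p$ is $G$-invariant. Self-adjointness is preserved since $p$ is real valued and symmetric. For the $L_2$-bound, I would note that the one dimensional space of constant functions on $V\times V$ is the subspace fixed by the diagonal $G$-action projected to $L_2(V \times V)$, and that $p$ is exactly the orthogonal projection of $H$ onto this space: $\|p\|_2^2 = \alpha^2$ and $(H,p) = \alpha \int H = \alpha^2$. By Pythagoras $\|H-p\|_2^2 = \|H\|_2^2 - \|p\|_2^2 \leq 1$.

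The main step is showing that $\ran(H-p)$ contains no copy of the trivial representation, so that all $G$-invariant subspaces there have dimension $\geq d$. Since $G$ acts transitively on $V = G/K$, any $G$-invariant function on $V$ is constant, so the isotypic component of the trivial representation in $\mathcal{H}$ is exactly the line of constants. Thus it suffices to show $\ran(H-p)$ is orthogonal to the constant function $\mathbf{1}$. For this I would use that the marginal $h(y) = \int_x H(x,y)\,dx$ is $G$-invariant on $V$, hence constant by transitivity; since $\int h = \alpha$ we get $h \equiv \alpha$. Therefore $(H-p)\mathbf{1}(x) = \int_y(H(x,y)-\alpha)\,dy = 0$ (using self-adjointness and the analogous computation in the other variable), and for any $f \in \mathcal{H}$, $((H-p)f, \mathbf{1}) = (f, (H-p)\mathbf{1}) = 0$. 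So $\ran(H-p) \perp \mathbf{1}$ as required.

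The expected obstacle is really only the bookkeeping in the last step: one must use the transitivity of the $G$-action on the coset space $V = G/K$ to identify the trivial-representation component with the constants, and must be a little careful that $p$ genuinely corresponds to this projection even though $p$ lives in $L_2(V \times V)$ rather than $L_2(V)$. Once the orthogonality of $\ran(H-p)$ to $\mathbf{1}$ is established, the conclusion is a direct application of Lemma \ref{specrad} to $H-p$ followed by Corollary \ref{qra1} with the same $d$.
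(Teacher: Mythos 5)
Your proposal is correct and follows essentially the same route as the paper: both reduce to Corollary \ref{qra1} applied to $H-p$ by showing that $\ran(H-p)$ lies in the orthogonal complement $\mathcal{H}^0$ of the constants, where every $G$-invariant subspace has dimension at least $d$. You simply supply the details (the marginal computation, the Pythagoras bound on $\|H-p\|_2$, and the identification of the trivial isotypic component with the constants via transitivity) that the paper dismisses as ``easy to see.''
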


\begin{proof} Let $\mathcal{H}^0$ denote the orthogonal space of the constant $1$ function on $V$. The smallest finite dimensional $G$ invariant subspace in $\mathcal{H}^0$ has dimension at least $d$. It is easy to see that $\ran(H-p)\subseteq\mathcal{H}_0$. Then corollary \ref{qra1} finishes the proof.
\end{proof}

We demonstrate the usefulness of this simple fact on the next example.
Let $S_n$ denote the $n$-dimensional sphere with the isometry invariant probability measure.
We call a graphon $W:S_n\times S_n\rightarrow [0,1]$ isometry invariant if $W$ is invariant under the induced action of the orthogonal group $O_{n+1}$ on $S_n$.
Note that a graphon $W$ is isometry invariant if and only if the value $W(x,y)$ depends only on the distance of $x$ and $y$.

The next proposition says that on a very high dimensional sphere every isometry invariant graphon is very close to being quasirandom.

\begin{proposition} If $W$ is an isometry invariant graphon with edge density $p$ on $S_n$ then $\|W-p\|_\square\leq 1/\sqrt{n+1}$.
\end{proposition}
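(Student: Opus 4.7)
The plan is to apply Corollary \ref{qra2} directly with $G=O_{n+1}$, $K=O_n$, and $V=S_n$, identifying $S_n$ with the coset space $O_{n+1}/O_n$ under the natural transitive action. First I would verify the hypotheses: $W$ is symmetric (self adjoint), bounded (so Hilbert--Schmidt with $\|W\|_2\leq\|W\|_\infty\leq 1$), and by assumption stabilized by the induced unitary action of $O_{n+1}$ on $\mathcal{H}=L_2(S_n,\mu)$. The constant $\alpha$ in the corollary is exactly the edge density $p=\int W$. Thus the corollary yields
\[
\|W-p\|_\square\leq 1/\sqrt{d},
\]
where $d$ is the dimension of the smallest non-trivial $O_{n+1}$-irreducible subrepresentation of $L_2(S_n)$.

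The remaining task is therefore to identify $d$. Here I would invoke the classical Peter--Weyl / spherical harmonics decomposition
\[
L_2(S_n,\mu)=\bigoplus_{k=0}^\infty \mathcal{H}_k,
\]
where $\mathcal{H}_k$ is the space of degree-$k$ spherical harmonics, and each $\mathcal{H}_k$ is an irreducible $O_{n+1}$-representation. The piece $\mathcal{H}_0$ consists of constants (the trivial representation), $\mathcal{H}_1$ is the standard representation coming from the coordinate functions $x_i|_{S_n}$, and has dimension $n+1$, while for $k\geq 2$ the dimensions $\binom{n+k}{k}-\binom{n+k-2}{k-2}$ are strictly larger than $n+1$. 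Hence the smallest non-trivial irreducible representation appearing in $L_2(S_n)$ has dimension exactly $n+1$, so $d=n+1$ and the bound $1/\sqrt{n+1}$ follows.

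The only real point of substance is the identification of $d$, which rests on the standard fact that the spherical harmonic spaces are irreducible under $O_{n+1}$ and that the linear harmonics already realize the minimal non-trivial dimension. I do not expect any serious obstacle: the boundedness and invariance hypotheses of Corollary \ref{qra2} are immediate from the definition of an isometry-invariant graphon, and everything else is bookkeeping about the spherical-harmonic decomposition.
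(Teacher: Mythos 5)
Your proposal is correct and follows exactly the paper's own route: the paper's proof consists of the single observation that the smallest non-trivial $O_{n+1}$-representation appearing in $L_2(S_n)$ has dimension $n+1$, followed by an appeal to Corollary \ref{qra2}. Your additional detail on the spherical-harmonic decomposition simply fills in the representation-theoretic fact the paper takes as classical.
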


\begin{proof} The smallest non trivial representation of the orthogonal group $O_{n+1}$ which appears on $L_2(S_n)$ has dimension $n+1$. Then corollary \ref{qra2} completes the proof.
\end{proof}

\subsection{Weakly random group actions}

\begin{definition} Let $G\subseteq U(L_2(V))$ be a unitary operator group. We denote by $\mathcal{I}(G,d)$ the set of $G$ invariant self adjoint kernel operators with $L_\infty$ norm at most $d$.
\end{definition}

\begin{lemma}\label{closedweak} The set $\mathcal{I}(G,d)$ is closed under weak convergence.
\end{lemma}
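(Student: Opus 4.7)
The plan is to write $\mathcal{I}(G,d)$ as the intersection of three sets, each of which is already closed in $\hth$ for the weak topology. Since the weak topology is closed under finite (indeed arbitrary) intersections of weakly closed sets, this will suffice.

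First I would handle the self-adjointness constraint. The condition $M(x,y)=\overline{M(y,x)}$ cuts out a real-linear, $L_2$-closed, convex subset of $\hth$; by Mazur's theorem (convex $L_2$-closed sets in a Hilbert space are weakly closed) this piece is weakly closed. Next I would treat the norm constraint: the $L_\infty$-ball $\{M\in\hth:\|M\|_\infty\le d\}$ is convex, and it is $L_2$-closed because $(V,\mu)$ is a probability space (so $L_\infty\subseteq L_2$) and any $L_2$-convergent sequence of functions bounded by $d$ in $L_\infty$ has an almost-everywhere convergent subsequence, whose limit is still bounded by $d$. Hence this piece is also weakly closed by Mazur.

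Finally I would address $G$-invariance. For each $\alpha\in G$ the induced action $M\mapsto M^\alpha$ on $\hth$ is, by the remarks preceding lemma \ref{inveigen}, a unitary operator $T_\alpha$ on the Hilbert space $\hth$ (concretely $T_\alpha(M)=\alpha M\alpha^{-1}$ conjugated by the unitary, so it preserves the $L_2$ inner product on kernels). Thus the fixed-point set $\{M:M^\alpha=M\}=\ker(I-T_\alpha)$ is a closed linear subspace of $\hth$, and closed linear subspaces are automatically weakly closed. Intersecting over all $\alpha\in G$ preserves weak closedness, giving that the $G$-invariant part is weakly closed.

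Intersecting the three pieces yields $\mathcal{I}(G,d)$, which is therefore weakly closed; any weakly convergent sequence in $\mathcal{I}(G,d)$ has its weak limit in $\mathcal{I}(G,d)$. The only step requiring any actual checking is the $L_2$-closedness of the $L_\infty$-ball, which is the small subsequence/Fatou argument noted above; the rest is bookkeeping combined with the elementary fact that unitary actions on a Hilbert space have weakly closed fixed subspaces.
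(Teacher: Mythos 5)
Your proof is correct. For the $G$-invariance part it rests on the same underlying fact as the paper's own argument, namely that the induced action $M\mapsto M^\alpha$ on $\hth$ is a unitary (hence weak-weak continuous) operator: the paper verifies this sequentially by computing $g^*H^\alpha f=\lim g^*H_i^\alpha f=\lim g^*H_if=g^*Hf$ for all $f,g\in L_2(V)$, whereas you phrase it topologically as the statement that $\ker(I-T_\alpha)$ is a norm-closed linear subspace and therefore weakly closed. The real difference is in scope: the paper's proof checks only the invariance condition and leaves the other two defining constraints of $\mathcal{I}(G,d)$ implicit, while you also verify that the self-adjoint kernels form a weakly closed set (real-linear, norm-closed, convex, hence weakly closed by Mazur) and that the $L_\infty$-ball of radius $d$ is weakly closed (convex and $L_2$-closed via the almost-everywhere subsequence argument; the paper itself records the weak compactness of the $L_\infty$-ball in its preliminaries). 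Your decomposition into three weakly closed pieces is therefore a more complete, and slightly stronger, version of the same argument: it shows $\mathcal{I}(G,d)$ is weakly closed as a set, not merely weakly sequentially closed, at the cost of invoking Mazur's theorem where the paper gets by with a one-line limit computation.
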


\begin{proof} Let $\{H_i\}_{i=1}^\infty$ be a weakly convergent sequence of kernel operators in $\mathcal{I}(G,d)$ and let $H$ be the weak limit. For every two functions $f,g\in L_2(V)$ we have that $g*H^\alpha f=\lim g^*H_i^\alpha f=\lim g^*H_if=g^*Hf$. This means that $H^\alpha=H$.
\end{proof}

We will need the following lemma about weak convergence.

\begin{lemma}\label{compconv} Let $C\subseteq L_2(V)$ be a compact set and $\{H_i\}_{i=1}^\infty$ be a sequence of kernel operators, with uniformly bounded $L_2$ norms, weakly converging to the $0$ function. Then
$$\lim_{i\to\infty}\max_{f,g\in C}\|g^* H_i f\|_2=0.$$
\end{lemma}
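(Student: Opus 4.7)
The plan is to promote the hypothesis of weak convergence, which only gives pointwise statements, to uniformity over $C\times C$ via a standard finite $\epsilon$-net argument. The key leverage is that compactness of $C$ in $L_2(V)$ is \emph{norm} compactness, hence total boundedness, which is much stronger than mere $L_2$-boundedness.

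First I would fix two finite constants: $K=\sup_i\|H_i\|_2$, finite by hypothesis, and $M=\sup_{f\in C}\|f\|_2$, finite because compact metric spaces are bounded. Given $\epsilon>0$, I would use total boundedness of $C$ to choose a finite $\epsilon$-net $f_1,\ldots,f_N\in C$ so that every $f\in C$ satisfies $\|f-f_j\|_2\le\epsilon$ for some index $j$.

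For arbitrary $f,g\in C$ with approximants $f_j,g_k$ from the net, the triangle inequality together with the Cauchy--Schwartz estimate (\ref{caeq}) gives
$$|g^*H_if-g_k^*H_if_j|\le|g^*H_i(f-f_j)|+|(g-g_k)^*H_if_j|\le 2MK\epsilon.$$
On the other hand, the observation made just before (\ref{caeq}) tells us that since $\{H_i\}$ has bounded $L_2$ norms and converges weakly to $0$, the scalars $g_k^*H_if_j$ tend to $0$ for every fixed pair $(j,k)$. As there are only $N^2$ such pairs, we can choose $i_0$ so that $|g_k^*H_if_j|\le\epsilon$ for all $i\ge i_0$ and all $j,k$ simultaneously. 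Combining the two estimates yields $|g^*H_if|\le(2MK+1)\epsilon$ uniformly in $f,g\in C$ for $i\ge i_0$. Since $\epsilon>0$ is arbitrary, the max tends to $0$. (The max is well-defined: $C\times C$ is compact and $(f,g)\mapsto g^*H_if$ is continuous by (\ref{caeq}).)

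There is no genuine obstacle; the argument is the standard principle that a uniformly bounded equicontinuous-on-average family converging pointwise on a compact set converges uniformly. The only conceptual point worth flagging is that norm-compactness of $C$ is used essentially in extracting the finite net --- in infinite dimensions one cannot replace it by mere $L_2$-boundedness, so the hypothesis is sharp.
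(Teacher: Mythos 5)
Your proof is correct, but it takes a different route from the paper's. The paper argues by contradiction via sequential compactness: it picks maximizing pairs $(f_i,g_i)$ for each $i$, passes to a subsequence along which $m_i\to m>0$ and $f_i\to f$, $g_i\to g$ in $L_2$, transfers the value $m$ to the fixed pair $(f,g)$ using the bound on $\|H_i\|_2$, and then contradicts weak convergence, which forces $g^*H_if\to 0$. You instead use the equivalent total-boundedness formulation of compactness to build a finite $\epsilon$-net and upgrade the pointwise convergence $g_k^*H_if_j\to 0$ (finitely many pairs) to a uniform bound over $C\times C$ via the estimate $|g^*H_if-g_k^*H_if_j|\le 2MK\epsilon$ from (\ref{caeq}). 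The two arguments lean on the same two ingredients --- norm compactness of $C$ and the Cauchy--Schwartz bound $|g^*H_if|\le\|g\|_2\|H_i\|_2\|f\|_2$ --- but yours is direct rather than by contradiction, avoids subsequence extraction, and gives an explicit quantitative dependence ($i_0$ determined by $\epsilon$, $N$, $M$, $K$), at the cost of being slightly longer to write. Your closing remarks (that the max is attained by continuity on the compact $C\times C$, and that norm compactness rather than mere boundedness is essential) are both accurate and worth stating.
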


\begin{proof} Using compactness of $C$ we can choose sequences $\{g_i\}_{i=1}^\infty$ and $\{f_i\}_{i=1}^\infty$ in $C$ such that $\|g_i^*H_i f_i\|_2=\max_{f,g\in C}\|g^* H_i f\|_2=m_i.$ Assume that $m=\limsup_{i\to\infty} m_i>0$. Then by choosing a subsequence we can assume that $m=\lim_{i\to\infty} m_i$. Furthermore by compactness of $C$ we can assume by choosing a subsequence that $\lim_{i\to\infty} f_i=f$ and $\lim_{i\to\infty} g_i=g$ where the convergence is in the $L_2$ norm. Now using the fact that the $L_2$ norms of $H_i$ are bounded we obtain that $\lim_{i\to\infty} \|g^*H_i f\|_2=m>0$ which is a contradiction.
\end{proof}

\begin{definition} Let $G$ be a group of unitary operators on $L_2(V)$. We say that $G$ acts weakly random if for every natural number $n$ the space $U_n$ generated by all $G$ invariant subspaces in $L_2(V)$ of dimension at most $n$ is finite dimensional.
\end{definition}

Next theorem shows a surprising graph theoretic aspect of weakly random group actions.

\begin{theorem}\label{weakrand} Let $G$ be a weakly random operator group and let $\mathcal{I}(G,d)$ denote the set of self adjoint integral kernel operators $M:V\times V\rightarrow\mathbb{C}$ that are invariant under $G$ and have $L_\infty$ norm at most $d<\infty$. Then weak convergence on $\mathcal{I}(G,d)$ coincides with convergence in the cut norm.
In particular $\mathcal{I}(G,d)$ is cut norm compact.
\end{theorem}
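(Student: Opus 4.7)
The plan is to reduce cut-norm convergence to convergence of a finite-dimensional matrix approximation. One direction (cut-norm implies weak) is already given by Lemma \ref{cutcontriv}, so the substantive task is to show that weak convergence in $\mathcal{I}(G,d)$ forces cut-norm convergence. Let $\{M_i\}_{i=1}^\infty$ converge weakly to $M$, and note that $\mathcal{I}(G,d)$ is weakly closed by Lemma \ref{closedweak}, so $M\in\mathcal{I}(G,d)$.

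The key observation is that weak randomness forces the high eigenspaces of \emph{every} operator in $\mathcal{I}(G,d)$ to lie in one fixed finite dimensional subspace. Fix $\alpha>0$. For any $N\in\mathcal{I}(G,d)$ we have $\|N\|_2^2\leq d^2$, so if $\lambda$ is an eigenvalue of $N$ with $|\lambda|>\alpha$ then its eigenspace has dimension at most $d^2/\alpha^2$. By Lemma \ref{inveigen} each such eigenspace is $G$-invariant, hence by the weak randomness hypothesis it is contained in $U:=U_{\lfloor d^2/\alpha^2\rfloor}$, which is finite dimensional and independent of $N$. In particular $\ran([N]_\alpha)\subseteq U$ for every $N\in\mathcal{I}(G,d)$, and by self-adjointness $[N]_\alpha=P[N]_\alpha P$ where $P$ denotes the orthogonal projection onto $U$.

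Write $M_i=[M_i]_\alpha+R_i$ and $M=[M]_\alpha+R$ with $\|R_i\|_\square,\|R\|_\square\leq\alpha$ (by Lemma \ref{cutapprox}). Applying $P\cdot P$ gives $PM_iP=[M_i]_\alpha+PR_iP$, and since $\|PR_iP\|_{op}\leq\|R_i\|_{op}=\spec(R_i)\leq\alpha$, Lemma \ref{specrad} yields $\|PR_iP\|_\square\leq\alpha$. Consequently $\|M_i-PM_iP\|_\square\leq 2\alpha$ and similarly $\|M-PMP\|_\square\leq 2\alpha$. Now $PM_iP$ lies in the finite dimensional subspace of $\hth$ spanned by $\{e_j\otimes e_k^*\}$, where $\{e_j\}$ is an orthonormal basis of $U$, and its coordinates in this basis are precisely the matrix entries $\<e_j,M_ie_k\>$. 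Weak convergence $M_i\rightharpoonup M$ therefore makes each coordinate converge to $\<e_j,Me_k\>$, and since all norms are equivalent on a finite dimensional space, $PM_iP\to PMP$ in cut norm. The triangle inequality gives $\limsup_i\|M_i-M\|_\square\leq 4\alpha$, and letting $\alpha\to 0$ completes the proof that $M_i\to M$ in cut norm.

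For compactness, observe that $\mathcal{I}(G,d)$ is bounded in $L_2$ (by $d$), hence every sequence has a weakly convergent subsequence in $\hth$, whose limit lies in $\mathcal{I}(G,d)$ by Lemma \ref{closedweak}. The equivalence just proved upgrades this to cut-norm subsequential convergence, which is cut-norm sequential compactness; since the cut norm metrizes the topology on $\mathcal{I}(G,d)$, this is compactness. The main obstacle is the uniform control in Step~2: one must exploit both the $L_\infty\leq d$ bound (to cap eigenspace dimensions above $\alpha$) and the weak randomness hypothesis (to collect all such eigenspaces into a single finite dimensional $U$); without a bound uniform over the whole family, the finite-dimensional reduction would collapse.
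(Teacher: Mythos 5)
Your proof is correct. It shares with the paper the decisive observation: weak randomness, combined with Lemma \ref{inveigen} and the bound $\|N\|_2\leq d$, places the ranges of all truncations $[N]_\alpha$ with $N\in\mathcal{I}(G,d)$ inside one fixed finite dimensional subspace $U$. But you exploit this differently. The paper subtracts the weak limit so that the sequence tends weakly to $0$, notes that every normalized eigenvector belonging to an eigenvalue of absolute value greater than $\epsilon$ lies in the unit ball $U^0$ of $U$, and invokes a separately proved compactness statement (Lemma \ref{compconv}) to conclude that $\max_{f\in U^0}|f^*H_if|\to 0$; hence the spectral radius of $H_i$ eventually drops below $\epsilon$ and Lemma \ref{specrad} finishes. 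You instead compress each $M_i$ to $U$: the identity $[M_i]_\alpha=P[M_i]_\alpha P$ together with $\|PR_iP\|_\square\leq\rad(R_i)\leq\alpha$ gives $\|M_i-PM_iP\|_\square\leq 2\alpha$, and the compressions live in a fixed finite dimensional subspace of $\hth$ whose coordinates are exactly the matrix entries $e_j^*M_ie_k$, which converge by weak convergence; equivalence of norms in finite dimensions does the rest. Your route avoids Lemma \ref{compconv} and its subsequence-extraction argument entirely, replacing it with elementary finite dimensional linear algebra, and it does not require reducing to the case of weak limit $0$; the paper's route is shorter on the page only because the compactness lemma is proved elsewhere. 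Both arguments carry the same quantitative content, namely that up to an $O(\alpha)$ error in the cut norm, an operator in $\mathcal{I}(G,d)$ is determined by finitely many inner products attached to $U$. Your treatment of the compactness claim at the end is also correct.
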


\begin{proof} Let $\{H_i\}_{i=1}^\infty$ be a weakly convergent sequence in $\mathcal{I}(G,d)$. By subtracting (from every term) the weak limit $H$ (which is also in $\mathcal{I}(G,d)$ by lemma \ref{closedweak}) we get a sequence in $\mathcal{I}(G,2d)$ which converges to $0$ weakly. This means that without loss of generality we can assume that the weak limit of $\{H_i\}_{i=1}^\infty$ is the $0$ function. Let us choose an arbitrary real number $\epsilon>0$.
Let $U$ be the space generated by all $G$ invariant subspaces of dimension at most $d^2/\epsilon^2$ and let $U^0$ be the unit ball in $U$. Using lemma \ref{inveigen} we get that every normalized eigenvector of $H_i$ corresponding to an eigenvalue of absolute value bigger then $\epsilon$ is in $U^0$.
On the other hand, using the compactness of $U_0$ and lemma \ref{compconv} we get that there in an index $j$ such that if $i>j$ then $\max_{f\in U^0}\|f^*H_if\|_2<\epsilon$.
This means that if $i>j$ then the spectral radius of $H_i$ is at most $\epsilon$ and so $\|H_i\|_\square\leq\epsilon$.
\end{proof}

\subsection{Sphere vs. circle}

Let $f:[-1,1]\rightarrow\mathbb{R}$ be a bounded measurable function. We denote by $S(n,f)$ the graphon defined on the unit spehere $S_n=\{x|~x\in\mathbb{R}^{n+1},\|x\|_2=1\}$ with the uniform distribution such that $w(x,y)=f(xy)$ where $xy$ is the usual scalar product.

The underlying topological space (in the sense of \cite{LSz7}) of $S(n,f)$ is either the sphere $S_n$ or just one point if $f$ is constant.
In this part we point out that the case $n=1$ is very different for $n>1$.
Let $\mathcal{S}^0_n$ denote the subset in $S(n,f)$ where $0\leq f(x)\leq 1$ for every $-1\leq x\leq 1$.

\begin{proposition}\label{spherecom} Let $n\geq 2$. Then the set $\mathcal{S}^0_n$ is compact in the cut norm.
\end{proposition}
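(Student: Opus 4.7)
The plan is to realize $\mathcal{S}^0_n$ as a cut norm closed subset of the cut norm compact space $\mathcal{I}(O_{n+1},1)$, combining Theorem \ref{weakrand} with the representation theory of the orthogonal group.

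First, I would set up $G=O_{n+1}$ acting unitarily on $\mathcal{H}=L_2(S_n)$ via the action induced from rotations of the sphere. Every $W=S(n,f)\in\mathcal{S}^0_n$ is $G$-invariant (since $f(\langle gx,gy\rangle)=f(\langle x,y\rangle)$) and $[0,1]$-valued, so $\mathcal{S}^0_n\subseteq\mathcal{I}(G,1)$. Conversely, since $O_{n+1}$ acts transitively on pairs of points of $S_n$ with a prescribed inner product, every $G$-invariant symmetric measurable function on $S_n\times S_n$ with values in $[0,1]$ has the form $S(n,f)$ for some measurable $f:[-1,1]\to[0,1]$. Thus $\mathcal{S}^0_n$ is exactly the $[0,1]$-valued subset of $\mathcal{I}(G,1)$.

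Next, the crux: I would verify that $G$ acts weakly random on $\mathcal{H}$ when $n\geq 2$. This uses the classical decomposition
$$L_2(S_n)=\bigoplus_{k=0}^{\infty}\mathcal{H}_k^{(n)},$$
where $\mathcal{H}_k^{(n)}$ is the space of degree-$k$ spherical harmonics, an irreducible $O_{n+1}$-representation of dimension $d_k^{(n)}=\binom{n+k}{k}-\binom{n+k-2}{k-2}$ (with the convention that the second term is $0$ for $k<2$). The representations $\mathcal{H}_k^{(n)}$ are pairwise non-isomorphic, so every $G$-invariant closed subspace of $L_2(S_n)$ is of the form $\bigoplus_{k\in I}\mathcal{H}_k^{(n)}$ for some $I\subseteq\mathbb{N}$. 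For $n\geq 2$, $d_k^{(n)}$ is a polynomial of degree $n-1$ in $k$ and hence tends to infinity; so for each $d\in\mathbb{N}$ only finitely many $k$ satisfy $d_k^{(n)}\leq d$. It follows that the space $U_d$ generated by all $G$-invariant subspaces of dimension at most $d$ equals the finite direct sum $\bigoplus_{d_k^{(n)}\leq d}\mathcal{H}_k^{(n)}$, which is finite dimensional.

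Applying Theorem \ref{weakrand} then gives cut norm compactness of $\mathcal{I}(G,1)$. To finish I would show that $\mathcal{S}^0_n$ is cut norm closed in $\mathcal{I}(G,1)$: if $W_i\in\mathcal{S}^0_n$ converges to $W\in\mathcal{I}(G,1)$ in cut norm, then by Lemma \ref{cutcontriv} the convergence is also weak in $L_2(S_n\times S_n)$, and since the convex set $\{U:0\leq U\leq 1\text{ a.e.}\}$ is weakly closed, $W$ is $[0,1]$-valued. Together with $G$-invariance of $W$ (which comes for free from $W\in\mathcal{I}(G,1)$ or alternatively from Lemma \ref{closedweak}) and the characterization from the first paragraph, $W\in\mathcal{S}^0_n$. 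As a closed subset of a cut norm compact set, $\mathcal{S}^0_n$ is cut norm compact.

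The main obstacle is the weak randomness step: it is not a purely analytic argument but depends on external representation-theoretic facts (Peter-Weyl type decomposition of $L_2(S_n)$, irreducibility of the spherical harmonic spaces, and the growth of their dimensions). The restriction $n\geq 2$ enters precisely here, since for $n=1$ the irreducible components of $L_2(S_1)$ under $O_2$ are all at most two-dimensional, so $U_2$ is infinite-dimensional and the whole strategy breaks down --- which is exactly the asymmetry between sphere and circle discussed in the introduction.
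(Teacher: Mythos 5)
Your proposal is correct and follows the same route as the paper: the paper's proof simply asserts that the induced $O_{n+1}$-action on $L_2(S_n)$ is weakly random by classical representation theory and invokes Theorem \ref{weakrand}. You supply the details the paper leaves implicit --- the spherical harmonic decomposition with the dimension count showing weak randomness fails exactly for $n=1$, and the verification that $\mathcal{S}^0_n$ is the weakly closed $[0,1]$-valued part of $\mathcal{I}(O_{n+1},1)$ --- so your write-up is a fleshed-out version of the paper's argument rather than a different one.
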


\begin{proof} For the first part let $O_{n+1}$ be the orthogonal group acting on $S_n$. The induced action of $O_n$ on $L_2(S_n)$ is defined by $f^\alpha(x)=f(x^\alpha)$ where $\alpha\in O_{n+1}$.
It is clear that spherical graphons of dimension $n$ are invariant under this action.
The representation theory of $O_{n+1}$ on $L_2(S_n)$ is a classical theory.
It acts weakly randomly which proves the first part.
\end{proof}

\begin{corollary} Let $n\geq 2$. Then the set $\mathcal{S}^0_n$ is closed in $\delta_\square$ and so it is characterizable by inequalities in subgraph densities .
\end{corollary}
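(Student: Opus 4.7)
The plan is to deduce the corollary directly from Proposition \ref{spherecom} together with the graph-polynomial characterization lemma stated earlier in the Graph limits subsection, with no additional spherical analysis needed.

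First I would transfer cut-norm compactness to $\delta_\square$-compactness. Since $\delta_\square(W_1,W_2)\leq\|W_1-W_2\|_\square$ (the infimum over pairs of measure preserving maps is bounded above by the value at the identity), the identity map $(\mathcal{S}^0_n,\|\cdot\|_\square)\to(\mathcal{S}^0_n,\delta_\square)$ is continuous. The continuous image of a compact set is compact, so $\mathcal{S}^0_n$ is compact in $\delta_\square$. Because $\delta_\square$ is a metric on $\mathcal{X}_0$, the space is Hausdorff, and every compact subset of a Hausdorff space is closed; this yields the first conclusion.

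For the second conclusion I would invoke the unnamed lemma in the Graph limits subsection which asserts that any closed subset $C\subseteq\mathcal{X}_0$ admits a presentation
$$C=\bigcap_{i=1}^\infty\{W\in\mathcal{X}_0\mid p_i(W)\geq 0\}$$
for a countable family of graph polynomials $p_i$. Applying this to $C=\mathcal{S}^0_n$ gives exactly the desired characterization through inequalities in subgraph densities.

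I do not expect any genuine obstacle here, since all of the substantive work has already been carried out in Proposition \ref{spherecom} (via weak randomness of the $O_{n+1}$-action on $L_2(S_n)$ for $n\geq 2$ and Theorem \ref{weakrand}). The only minor point to verify is that $\mathcal{S}^0_n$ is a well-defined subset of $\mathcal{X}_0$, i.e.\ that distinct profile functions $f$ may give $\delta_\square$-equivalent graphons; but this merely means one should interpret $\mathcal{S}^0_n$ as the image in $\mathcal{X}_0$ of the set of spherical graphons with $0\leq f\leq 1$, and the argument above is unaffected.
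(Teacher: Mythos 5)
Your argument is correct, and for the closedness part it takes a genuinely different, purely topological route from the paper's. The paper argues sequentially: given a $\delta_\square$-convergent sequence in $\mathcal{S}^0_n$, it extracts a weakly convergent subsequence (weak compactness of the $L_\infty$ ball), uses Proposition \ref{spherecom} --- that is, weak randomness of the $O_{n+1}$-action on $L_2(S_n)$ together with Theorem \ref{weakrand} --- to upgrade weak convergence to cut norm convergence, notes that the limit stays in $\mathcal{S}^0_n$, and identifies it with the $\delta_\square$-limit of the original sequence. You instead take the compactness conclusion of Proposition \ref{spherecom} at face value, push it forward through the $1$-Lipschitz quotient map $(\mathcal{S}^0_n,\|\cdot\|_\square)\to(\mathcal{X}_0,\delta_\square)$, and invoke the fact that a compact subset of a metric (hence Hausdorff) space is closed. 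The mathematical content is the same in both cases --- all of the substance sits in Proposition \ref{spherecom} --- but your version avoids re-running the subsequence extraction and isolates exactly which topological facts are used, whereas the paper's version is more self-contained and makes explicit why the cut-norm limit of the subsequence must coincide with the $\delta_\square$-limit (uniqueness of limits in the metric space $\mathcal{X}_0$). The second half, applying the unnamed lemma of the Graph limits subsection to present the closed set as $\cap_{i}\{W\mid p_i(W)\geq 0\}$, is handled identically. Your closing remark --- that $\mathcal{S}^0_n$ should be read as the image in $\mathcal{X}_0$ of the spherical graphons with $0\leq f\leq 1$, which also implicitly identifies $(S_n,\mu)$ with a standard probability space --- correctly resolves the only ambiguity, one that the paper itself glosses over.
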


\begin{proof} If $\{M_i\}_{i=1}^\infty$ in $\mathcal{S}^0_n$ is $\delta_\square$ convergent then we can choose a weakly convergent subsequence. By proposition \ref{spherecom} this subsequence is cot norm convergent. The limit $M$ is in $\mathcal{S}^0_n$ and it has to coincide with the $\delta_\square$ limit of $\{M_i\}_{i=1}^\infty$.
\end{proof}

\begin{proposition} If $n=1$ then the set $\mathcal{S}^0_n$ is not compact in the cut norm.
\end{proposition}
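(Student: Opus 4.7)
My plan is to exhibit an explicit sequence in $\mathcal{S}^0_1$ with no cut-norm convergent (in fact, no cut-norm Cauchy) subsequence. The underlying reason is that, in contrast to $O_{n+1}$ on $L_2(S_n)$ for $n \geq 2$, the action of $O_2$ on $L_2(S_1)$ decomposes into infinitely many $2$-dimensional irreducibles (one for each frequency $k \geq 1$), so the group fails to act weakly randomly and Theorem~\ref{weakrand} is unavailable. I will turn this representation-theoretic obstruction into a concrete counterexample by letting the sequence walk through these irreducibles one at a time.

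Concretely, let $T_k$ denote the $k$-th Chebyshev polynomial, so $T_k(\cos\theta)=\cos(k\theta)$, and set $f_k(t)=\tfrac12+\tfrac12 T_k(t)$. Then $f_k:[-1,1]\to[0,1]$ is bounded measurable, so $W_k:=S(1,f_k)\in\mathcal{S}^0_1$. Parametrising $S_1$ by angle $\theta\in[0,2\pi)$ with normalised Lebesgue measure, one has $W_k(x,y)=\tfrac12+\tfrac12\cos(k(\theta_x-\theta_y))$. The first step is to record that the characters $\varphi_k(\theta)=e^{ik\theta}$ satisfy $\|\varphi_k\|_\infty=\|\varphi_k\|_2=1$, and a direct computation using $\cos(k(\theta_x-\theta_y))=\tfrac12(\varphi_k(x)\overline{\varphi_k(y)}+\overline{\varphi_k(x)}\varphi_k(y))$ gives $W_k\varphi_k=\tfrac14\varphi_k$ (the constant term vanishes against $\varphi_k$ since $k\ge 1$). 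For $\ell\ne k$ with $\ell\ge 1$, the same computation shows $\varphi_k^*W_\ell\varphi_k=0$, since each of the two rank-one summands of $W_\ell-\tfrac12$ acts on $\varphi_k$ as zero by orthogonality of distinct characters, and the constant part also contributes $0$.

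From these two identities I obtain, for any $k\ne\ell$ in $\{1,2,\dots\}$,
\[
\|W_k-W_\ell\|_\square \;\ge\; \bigl|\varphi_k^*(W_k-W_\ell)\varphi_k\bigr| \;=\; \tfrac14,
\]
because the cut norm permits $L_\infty$-bounded complex test functions and $\|\varphi_k\|_\infty=1$. Consequently the sequence $\{W_k\}_{k=1}^\infty$ contains no cut-norm Cauchy subsequence, which is incompatible with compactness in the cut norm. This will complete the proof.

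I expect no serious obstacle: the only subtlety is keeping track of the measure normalisation and the fact that the cut norm in this paper allows complex test functions, so that the rank-one lower bound via the eigenfunction $\varphi_k$ actually applies. The choice of $W_k$ is essentially forced once one notices that the $2$-dimensional $O_2$-invariant subspaces $\mathrm{span}(\varphi_k,\varphi_{-k})$ are the obstruction to weak randomness, and the construction simply uses the isometry-invariant graphon whose spectrum is supported precisely on the $k$-th such irreducible.
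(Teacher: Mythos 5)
Your proof is correct, and it reaches the conclusion by a genuinely different (though closely related) route. The paper also builds a frequency-escaping family: it takes the graphon $W$ on $S_1\cong\mathbb{R}/\mathbb{Z}$ equal to $1$ where $xy>0$ and $0$ where $xy<0$, and pulls it back along the measure-preserving maps $\psi_k:a\mapsto ka$ (these pullbacks stay in $\mathcal{S}^0_1$ for the same Chebyshev reason that your $f_k=\frac{1}{2}+\frac{1}{2}T_k$ does). But the non-compactness is then derived indirectly: every $W^{\psi_k}$ is at $\delta_\square$-distance $0$ from $W$, so a cut-norm limit $L$ of any subsequence would satisfy $\|L\|_2=\|W\|_2$, whence Lemma~\ref{corhv2} (via Corollary~\ref{corhv}) upgrades cut-norm convergence to $L_2$-convergence, whose failure is left as ``easy to see''. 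You replace that rigidity step with a direct spectral computation: $\varphi_k$ is a unit eigenvector of $W_k$ with eigenvalue $1/4$ and is annihilated by $W_\ell-\frac{1}{2}$ for $\ell\neq k$, giving the explicit uniform separation $\|W_k-W_\ell\|_\square\geq 1/4$ and hence no Cauchy subsequence. Your version is self-contained, quantitative, and makes the representation-theoretic obstruction (the infinitely many two-dimensional $O_2$-irreducibles spanned by $\varphi_k,\varphi_{-k}$) explicit; the paper's version is shorter given the lemmas already in place and additionally exhibits a single $\delta_\square$-equivalence class that fails to be cut-norm compact --- a property your family $W_k=W_1^{\psi_k}$ in fact shares, though you do not use it. The point you flagged about complex test functions is the right one to check and is harmless: even if only real test functions were allowed, splitting $\varphi_k$ into $\cos(k\theta)$ and $\sin(k\theta)$ would preserve a uniform positive lower bound up to a constant factor.
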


\begin{proof} Let us define the graphon $W$ on the circle $S_1$ by $W(x,y)=1$ if $xy>0$ and $W(x,y)=0$ if $xy<0$. Let us represent $S_1$ and the abelian group $A=\mathbb{R}/\mathbb{Z}$. It is easy to see an well known that for every fixed $k$ the map $\psi_k:a\mapsto ka$ is a measure preserving map on $A$. This means that $\delta_\square(W^{\psi_k},W)=0$ for every $k$.
We show that the sequence $\{W^{\psi_k}\}_{k=1}^\infty$ does not have a cut norm convergent sub sequence. Assume by contradiction the $\{W^{\psi_{k_i}}\}_{i=1}^\infty$ is convergent in the cut norm. Then the limit $L$ has $\delta_\square$ distance $0$ form $W$ and thus $\|L\|_2=\|W\|_2$.
By lemma \ref{corhv2} this means that $\{W^{\psi_{k_i}}\}_{i=1}^\infty$ is convergent in $L_2$.
It is easy to see that this is not the case.
\end{proof}

Without proof we mention that the set $\mathcal{S}^0_1$ is not even closed in $\delta_\square$. There are examples where a sequence of graphons in $\mathcal{S}^0_1$ converges to a graphon whose underlying topological space is the torus. Such a graphon can't be represented on the circle.

Motivated by the above results it is natural to introduce the following notion.

\begin{definition} Let $(V,\mathcal{A},\mu)$ be a probability space with $\sigma$-algebra $\mathcal{A}$ and measure $\mu$. Let $\mathcal{B}\subseteq \mathcal{A}\times\mathcal{A}$ be a sub $\sigma$-algebra on the product space $V\times V$. We call $\mathcal{B}$ weakly random if in the set of functions $\{M|M\in L_\infty(\mathcal{B}),\|M\|_\infty\leq 1\}$ weak convergence implies cut norm convergence.
\end{definition}

Proposition \ref{spherecom} says that if $V$ is the sphere $S_2$ and $\mathcal{B}$ consists of those Borel measurable sets that are invariant under the diagonal action of $O_3$ on $V\times V$ then $\mathcal{B}$ is weakly random.

\begin{question} Is there any characterization of weakly random $\sigma$-algebras?
\end{question}

Another interesting topic is to understand when $\{M|M\in L_\infty(\mathcal{B}),\|M\|_\infty\leq 1\}$ is closed in the $\delta_\square$ metric. Weakly randomness implies this but the other direction is not true.

\vskip 0.2in

\noindent
Bal\'azs Szegedy
\noindent
University of Toronto, Department of Mathematics,
\noindent
St George St. 40, Toronto, ON, M5R 2E4, Canada

\end{document}